\title{On invex functions with same $\eta$ in single- and multivalued nonsmooth optimization with Clarke's subdifferential}
\author{Ville Rinne, Yury Nikulin\footnote{Corresponding author}, Marko M. Mäkelä}
\date{February 2025}
\theoremstyle{definition}
\newtheorem{maar}{Definition}[section]
\newtheorem{lemma}[maar]{Lemma}
\newtheorem{theorem}[maar]{Theorem}
\newtheorem{example}[maar]{Example}
\newtheorem{rem}[maar]{Remark}
\newtheorem{assum}[maar]{Assumption}
\begin{document}
\maketitle
\begin{abstract}
In this paper, a finite family of nonsmooth locally Lipschitz continuous functions that are invex with respect to the same function $\eta$ are characterized in terms of their scalarized counterparts.
\end{abstract}

{\bf Keywords:} Invexity, nonsmooth analysis, Clarke's subdifferential, single- and multivalued optimization.

{\bf MSC2020:} 90C56, 49J52.   
\newpage
\section{Introduction}
Invexity is an important concept in mathematical optimization, extending the ideas of convexity to a wider range of problems. In traditional convex optimization, a function is convex if any line segment between two points on the function's graph lies above or on the graph itself. This property ensures that any local minimum is also a global one, making convex optimization problems relatively easier to solve.

However, not all real-world problems exhibit the strict property required for convexity. This is where invexity becomes useful. A function is considered invex if a differentiable function (usually denoted $\eta$) can transform the original problem into one in which sufficient conditions still apply for Karush-Kuhn-Tucker (KKT) conditions. These KKT conditions are crucial for determining the optimality in constrained optimization. Unlike convexity, invexity allows for more flexibility, making it possible to solve problems where the objective function might not be convex, but still retains some structural properties guaranteeing global optimality.

The concept of invexity broadens the scope of optimization by enabling the solution of nonconvex problems under more relaxed conditions, making it highly valuable in fields such as economics, engineering, and operations research \cite{CravenApp}. In essence, invexity provides a powerful tool for solving a wider array of optimization problems without sacrificing the assurance of finding optimal solutions.

In this paper, we follow the idea of \cite{Leg} and analyze invexity with respect to the same $\eta$ function for a finite set of nonsmooth locally Lipschitz continuous (LLC) functions in single- and multivalued nonsmooth optimization. Invexity with respect to the same function $\eta$, is a desirable theoretical property for a finite set of LLC functions unifying them. It can efficiently be used in optimality conditions, algorithm designing, and so on. The paper is organized as follows. Section \ref{sec2} contains all the necessary preliminaries. Section \ref{sec3} discusses invex nonsmooth functions for single and multivalued functions. Two theorems providing a link between optimality and invexity are highlighted. Section \ref{sec4} contains the main result of the paper. We start by presenting the known result of \cite{Leg} for differentiable cases. Then we give a counterexample showing that a straightforward generalization of the result is not possible for the case of nonsmooth LLC functions without Clarke’s regularity assumption. Then we formulate the main result of the paper (Theorem \ref{MTh}) showing the equivalence of four certain conditions characterizing V-invex functions with respect to the same $\eta$.

\section{Preliminaries}\label{sec2}
In this section, we discuss functions that are neither continuously differentiable nor convex. They have points in their domain where their gradient is not continuous. In these points, we define Clarke's generalized directional derivative and subgradient. 

We start by defining classical concepts from nonsmooth analysis - local Lipschitz continuity (LLC) and Lipschitz continuity (LC).

\begin{maar} {\cite{BaKaMa}} A function $f:\mathbb R^n \rightarrow \mathbb R$ is \emph{LLC at a point} $\boldsymbol x \in \mathbb  R^n$ if there exist scalars $K_{\boldsymbol{x}} > 0$ and $\delta_{\boldsymbol{x}} > 0$ such that
$$|f(\boldsymbol y) - f(\boldsymbol z)| \leq K_{\boldsymbol{x}} \left\|\boldsymbol y - \boldsymbol z\right\|,\ \forall\ \boldsymbol y, \boldsymbol z \in B(\boldsymbol x;\delta_{\boldsymbol{x}}),$$
\noindent
where $B(\boldsymbol x;\delta_{\boldsymbol{x}})$ denotes the open ball with the center $\boldsymbol{x}$ and radius $\delta_{\boldsymbol{x}}$. A function $f:\mathbb R^n \rightarrow \mathbb R$ is \emph{LLC} if it is LLC at every point belonging to $\mathbb R^n$. 
\end{maar}

\begin{maar} {\cite{BaKaMa}}
A function $f:\mathbb R^n \rightarrow \mathbb R$ is \emph{LC} if there exists a scalar $K$ such that
$$|f(\boldsymbol y) - f(\boldsymbol z)| \leq K \left\|\boldsymbol y - \boldsymbol z\right\|,\ \forall\ \boldsymbol y, \boldsymbol z \in \mathbb R^n.$$
\end{maar}

\begin{maar}
A vector-valued function $\boldsymbol F:\mathbb R^n \rightarrow \mathbb R^p$ is LLC if each vector component is LLC.  
\end{maar}

We continue by defining the Clarke generalized directional  (sometimes referred to as the limiting directional) derivative for LLC functions and discuss its properties.
\begin{maar} {\cite{Cla}}\label{GDD} Let $f:\mathbb R^n \rightarrow \mathbb R$ be LLC at $\boldsymbol x^* \in \mathbb R^n$. The \emph{Clarke's generalized directional derivative} of $f$ at $\boldsymbol x^*$ in direction $\boldsymbol d \in \mathbb R^n$ is defined as follows:
$$f^o(\boldsymbol x^*; \boldsymbol d) = \limsup_{\boldsymbol y \rightarrow \boldsymbol x^*,\ t \rightarrow 0_+}\frac{f(\boldsymbol y + t \boldsymbol d) - f(\boldsymbol y)}{t},$$ 
or equivalently,
$$f^o(\boldsymbol x^*; \boldsymbol d) = \inf_{\delta > 0}\ \sup_{\substack{\Vert y-x^*\Vert \leq \delta,\\ 0<t<\delta}}\frac{f(\boldsymbol y + t \boldsymbol d) - f(\boldsymbol y)}{t}.$$
\end{maar}

Note that for LLC functions, we always have  $f^o(\boldsymbol x^*; \boldsymbol d) < \infty$. In the definition of the classical directional derivative, the base point for taking differences is a fixed vector $\boldsymbol x^*$. In the generalized directional derivative, they are taken from a variable vector $\boldsymbol y$, which approaches $\boldsymbol x^*$.

Next, we define Clarke's generalized subdifferential (and subgradient as its element) using Clarke's generalized directional derivative and discuss its properties.

\begin{maar}{\cite{Cla}}\label{subdif} Let $f:\mathbb R^n \rightarrow \mathbb R$ be LLC function at a point $\boldsymbol x^* \in \mathbb R^n$. \emph{Clarke's subdifferential} of $f$ at $\boldsymbol x^*$ is the set $\partial f(\boldsymbol x^*)$ of vectors $\boldsymbol \xi \in \mathbb R^n$ such that
$$\partial f(\boldsymbol x^*) = \{\boldsymbol \xi \mid f^o(\boldsymbol x^*; \boldsymbol d) \geq \boldsymbol \xi^T \boldsymbol d,\ \forall\ \boldsymbol d \in \mathbb R^n\}.$$
Each $\boldsymbol \xi \in \partial f(\boldsymbol x^*)$ is called a \emph{subgradient} of $f$ at $\boldsymbol x^*$. 
\end{maar}

For a convex function, the above definition is equal to the subdifferential of a convex function \cite{Cla}. 
Clarke's subdifferential
has the so-called classical plus-minus symmetry, in other words 
\begin{equation}\label{symm}
\partial (-f(\boldsymbol x^*))=-\partial f(\boldsymbol x^*).
\end{equation}

In some books \cite{BMord, Mord} plus-minus symmetry (\ref{symm}) for Clarke's subdifferential of LLC functions was mentioned as a drawback leading to non-distinguishing between convex and concave functions, as well as local minima and maxima. However, the symmetry property might be crucial and desirable in many situations because it assures that Clarke's subdifferential behaves "nicely" under the negation of the function, maintaining its consistency and robustness as a generalization of the derivative concept to nonsmooth contexts.

A well-known result formulated and proven by Rademacher (see Theorem 9.60 in \cite{RW09}) states that an LLC function $f$ is differentiable almost everywhere. In particular, every neighborhood of $\boldsymbol x$ contains a point $\boldsymbol y$ such that $\nabla f(\boldsymbol y)$ exists. 
So, we have an equivalent definition for Clarke's subdifferential of LLC function 
\begin{equation}\label{Clarkesubdefeq}
\partial f(\boldsymbol x^*) = \textnormal{conv}\{\boldsymbol \xi \in \mathbb R^n\mid \exists\, \boldsymbol x^k\rightarrow \boldsymbol x^*,\ \nabla f(\boldsymbol x^k)\ \text{exists}\ \textnormal{and}\ \nabla f(\boldsymbol x^k)\rightarrow \boldsymbol \xi\},
\end{equation}
where $\textnormal{conv}$ denotes the $\emph{convex hull}$ of a set.
Let's notice that (\ref{Clarkesubdefeq}) can be conveniently used in practice to compute Clarke's subdifferential of an LLC function.

The \textit{subdifferential in convex analysis}, also called the \textit{convex subdifferential}, is defined for a \textit{convex function} \(f:\mathbb{R}^n \to \mathbb{R} \) at a point \( \boldsymbol{x^*} \) as:
\[
\partial_{conv} f(\boldsymbol{x^*}) = \{ \boldsymbol{\xi} \in \mathbb{R}^n : f(\boldsymbol{y}) \geq f(\boldsymbol {x^*}) + \boldsymbol{\xi}^\top (\boldsymbol{y} - \boldsymbol{x^*}), \, \forall\, \boldsymbol{y} \in \mathbb{R}^n \}.
\]
Naturally, any convex $f$ is also LLC \cite{CLLC}.

For a general LLC function \( f \), the Clarke subdifferential is typically larger than the convex subdifferential:
\[
\partial f(\mathbf{\boldsymbol x^*}) \supseteq \partial_{{conv}} f(\mathbf{\boldsymbol x^*}),
\]
with equality only when \( f \) is convex. \cite{Cla}

The next theorem shows that subgradients are generalizations of the classical gradient.
\begin{theorem} {\cite{BaKaMa}} If $f:\mathbb R^n \rightarrow \mathbb R$ is continuously differentiable at $\boldsymbol x \in \mathbb R^n$, then
$$\partial f(\boldsymbol x) = \{\nabla f(\boldsymbol x)\}.$$ \end{theorem}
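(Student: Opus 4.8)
The plan is to reduce everything to an explicit computation of Clarke's generalized directional derivative $f^o(\boldsymbol x;\boldsymbol d)$: once we show that $f^o(\boldsymbol x;\boldsymbol d)=\nabla f(\boldsymbol x)^T\boldsymbol d$ for every direction $\boldsymbol d$, the conclusion drops out of Definition \ref{subdif}. As a preliminary remark, continuous differentiability of $f$ at $\boldsymbol x$ means $f$ is $C^1$ on some open ball $B(\boldsymbol x;r)$; since a continuous gradient is bounded on the compact ball $\overline{B}(\boldsymbol x;r/2)$, the mean value theorem shows $f$ is LLC at $\boldsymbol x$, so $\partial f(\boldsymbol x)$ is indeed well-defined and the theorem statement makes sense.

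For the core step, fix $\boldsymbol d\in\mathbb R^n$. For $\boldsymbol y$ close enough to $\boldsymbol x$ and $t>0$ small enough the whole segment $[\boldsymbol y,\boldsymbol y+t\boldsymbol d]$ lies in $B(\boldsymbol x;r)$, so the mean value theorem gives some $\theta=\theta(\boldsymbol y,t)\in(0,1)$ with $\tfrac{f(\boldsymbol y+t\boldsymbol d)-f(\boldsymbol y)}{t}=\nabla f(\boldsymbol y+\theta t\boldsymbol d)^T\boldsymbol d$. As $\boldsymbol y\to\boldsymbol x$ and $t\to 0_+$, the base point $\boldsymbol y+\theta t\boldsymbol d\to\boldsymbol x$, and continuity of $\nabla f$ forces $\nabla f(\boldsymbol y+\theta t\boldsymbol d)^T\boldsymbol d\to\nabla f(\boldsymbol x)^T\boldsymbol d$; hence the $\limsup$ in Definition \ref{GDD} equals $\nabla f(\boldsymbol x)^T\boldsymbol d$. (The inequality $f^o(\boldsymbol x;\boldsymbol d)\ge\nabla f(\boldsymbol x)^T\boldsymbol d$ alone is immediate by restricting to $\boldsymbol y=\boldsymbol x$ and using the classical directional derivative; the content is the matching upper bound.)

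Finally, by Definition \ref{subdif}, $\boldsymbol\xi\in\partial f(\boldsymbol x)$ iff $\nabla f(\boldsymbol x)^T\boldsymbol d\ge\boldsymbol\xi^T\boldsymbol d$ for all $\boldsymbol d\in\mathbb R^n$; applying this to $\boldsymbol d$ and to $-\boldsymbol d$ yields $(\boldsymbol\xi-\nabla f(\boldsymbol x))^T\boldsymbol d=0$ for every $\boldsymbol d$, so $\boldsymbol\xi=\nabla f(\boldsymbol x)$, while conversely $\nabla f(\boldsymbol x)$ obviously satisfies the defining inequality (with equality). Thus $\partial f(\boldsymbol x)=\{\nabla f(\boldsymbol x)\}$. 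The only genuinely delicate point — the "main obstacle" — is the passage to the limit in the mean value expression: it must use the \emph{continuity} of $\nabla f$ on a neighborhood (an $\varepsilon$–$\delta$ estimate), not merely differentiability at the single point $\boldsymbol x$, since the base points $\boldsymbol y+\theta t\boldsymbol d$ roam over a whole neighborhood; everything else is routine.
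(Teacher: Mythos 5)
Your proof is correct. The paper itself gives no argument for this theorem---it only cites \cite{BaKaMa}, Theorem 3.7---and your reduction of $f^o(\boldsymbol x;\boldsymbol d)$ to $\nabla f(\boldsymbol x)^T\boldsymbol d$ via the mean value theorem and continuity of the gradient, followed by the $\boldsymbol d$/$-\boldsymbol d$ argument in Definition \ref{subdif}, is precisely the standard argument that reference uses, with the one genuinely delicate point (continuity of $\nabla f$ on a neighborhood, not mere differentiability at $\boldsymbol x$) correctly identified.
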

\begin{proof} See, e.g. {\cite{BaKaMa}}, Theorem 3.7. \end{proof}

The Jacobian matrix of a vector-valued LLC function can be similarly generalized for the nonsmooth case.
\begin{maar}\cite{Cla}
The \emph{generalized Jacobian matrix} $J(\boldsymbol F((\boldsymbol{u})))$ of a nonsmooth vector-valued LLC function $\boldsymbol F = (f_1,\dots,f_p)$ at $\boldsymbol{u}$ is defined as  
$$J(\boldsymbol F(\boldsymbol{u})) = \big(\partial f_i(\boldsymbol{u})\big)_{i=1,\dots, p},$$
where $\partial f_i(\boldsymbol{u})$ is the Clarke subdifferential of $f_i$ at $\boldsymbol{u}$. In other words, the $i$:th row of the Jacobian is the Clarke subdifferential of $f_i$ at $\boldsymbol{u}$.
\end{maar}

\section{Single and vector-valued invex LLC functions}\label{sec3}

In this section, we discuss invex nonsmooth functions, which are generalizations of convex functions. Invex functions are rather large groups of functions retaining the useful property of convex functions, namely that, under mild conditions, we have the following (see Theorem 1 in \cite{Rei}):
\begin{itemize}
\item
for invex functions defined on $\mathbb R^n$ every stationary  point $\boldsymbol{u}\in \mathbb R^n$ (that is $\boldsymbol{0} \in \partial f(\boldsymbol{u})$) is a global minimum.     
\end{itemize}

First, we define invexity for single-valued nonsmooth functions and then generalize the concept of invexity for vector-valued nonsmooth functions.

Let us present the definition for an invex nonsmooth function, which was first conceived by Hanson {\cite{Hanson}} for differentiable functions. The following definition from {\cite{Rei}} generalizes the notion of invexity for nonsmooth functions.

\begin{maar}{\cite{Rei}} \label{invdef} An LLC function $f: \mathbb R^n \rightarrow \mathbb R$ is \emph{invex} if there exists a function $\boldsymbol \eta : \mathbb R^n \times \mathbb R^n \rightarrow \mathbb R^n$ such that for all $\boldsymbol x, \boldsymbol u\ \in\ \mathbb R^n$ and for all $\boldsymbol \xi \in \partial f(\boldsymbol u)$ we have
$$f(\boldsymbol x) - f(\boldsymbol u) \geq \boldsymbol \xi^T \boldsymbol \eta(\boldsymbol x; \boldsymbol u),$$
or equivalently
$$f(\boldsymbol x) - f(\boldsymbol u) \geq  f^o(\boldsymbol u; \boldsymbol \eta(\boldsymbol x; \boldsymbol u)).$$
\end{maar}

Here $f^o(\boldsymbol u; \boldsymbol \eta(\boldsymbol x; \boldsymbol u))$ is Clarke's generalized directional derivative defined at point $\boldsymbol u$ along the direction $\boldsymbol \eta(\boldsymbol x; \boldsymbol u)$ (see Definition \ref{GDD}).
By setting $\boldsymbol \eta(\boldsymbol x; \boldsymbol u) = \boldsymbol u - \boldsymbol x$, we can see that a convex function is a special case of an invex function.  

Hanson also gave a slightly relaxed definition for invexity in \cite{Hanson2}. These types of functions are called \emph{Type I invex functions}, and they form the most general class of functions for which the KKT conditions are necessary and sufficient for a global minimum. Unlike invex functions, invexity for Type I invex functions is defined relative to a fixed point $\boldsymbol{u}$, and for this reason, they are not necessarily invex for every point in $\mathbb R^n$.

The utility of invex functions was established when Hanson showed that if the objective and constraint functions of a nonlinear programming model are invex with respect to the same $\boldsymbol \eta$, then the weak duality and sufficiency of the KKT conditions still hold. The word 'invex' descends from a contraction of 'invariant convex', and Craven proposed it in \cite{Craven}. Craven and Glover \cite{CraGlo} as well as Ben-Israel and Mond \cite{BenMond} showed that the class of invex functions under the usual "closed cone" assumption is equivalent to the class of functions whose stationary points are global minima in unconstrained case. 

Notice that invexity can also be introduced for non-LLC functions. For example, in \cite{SiLa}, invexity is introduced for vector quasidifferentiable \cite{Dem} functions using quasidifferential instead of Clarke's subdifferential. More details on quasidifferential can be found in \cite{Craven2, Dem2, DemNiSha, DemPo}.

Let us now recall the classical definition of weakly continuous functions.

\begin{maar}\label{WC}\cite{RW09}
A function $f: \mathbb R^n \rightarrow \mathbb R$ is \emph{weakly continuous} if it is continuous with respect to the weak topology on \( \mathbb R^n \). This means that for any sequence \( \{\boldsymbol x_k\} \in \mathbb R^n \) that converges weakly to \(\boldsymbol x \), i.e.,  
\[
\ell(\boldsymbol x_k) \to \ell(\boldsymbol x) \quad \text{for all } \ell \in (\mathbb R^n)^*,
\]
it follows that
\[
f(\boldsymbol x_k) \to f(\boldsymbol x),
\]
where $\ell$ is a continuous linear functional, and  $(\mathbb R^n)^*$ is the dual space of $\mathbb R^n$.
\end{maar}

Intuitively, this means the sequence convergences when viewed through the "lens" of every bounded linear functional in the dual space. We have the following well-known result in finite-dimensional spaces $\mathbb R^n$ because the weak topology and the standard topology coincide.

\begin{lemma}\cite{RW09}\label{LLCWC}
Any LLC function $f:\mathbb R^n \rightarrow \mathbb R$ is also weakly continuous.    
\end{lemma}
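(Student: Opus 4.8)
The plan is to reduce weak continuity to ordinary norm continuity, using the finite dimensionality of $\mathbb R^n$, and then to invoke the fact that an LLC function is norm-continuous.

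First I would record that on $\mathbb R^n$ the weak topology coincides with the standard (norm) topology. Concretely, $(\mathbb R^n)^*$ is spanned by the coordinate functionals $\ell_i(\boldsymbol x)=x_i$, $i=1,\dots,n$; hence a sequence $\{\boldsymbol x_k\}$ converging weakly to $\boldsymbol x$ satisfies $\ell_i(\boldsymbol x_k)\to \ell_i(\boldsymbol x)$ for every $i$, i.e.\ it converges coordinatewise, and coordinatewise convergence of a sequence in $\mathbb R^n$ is equivalent to convergence in $\|\cdot\|$. So the hypothesis ``$\boldsymbol x_k$ converges weakly to $\boldsymbol x$'' appearing in Definition \ref{WC} may be replaced throughout by ``$\|\boldsymbol x_k-\boldsymbol x\|\to 0$''.

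Next I would use the definition of local Lipschitz continuity directly. Fix $\boldsymbol x\in\mathbb R^n$ and take the scalars $K_{\boldsymbol x}>0$ and $\delta_{\boldsymbol x}>0$ guaranteed by LLC of $f$ at $\boldsymbol x$. If $\|\boldsymbol x_k-\boldsymbol x\|\to 0$, then $\boldsymbol x_k\in B(\boldsymbol x;\delta_{\boldsymbol x})$ for all sufficiently large $k$, whence $|f(\boldsymbol x_k)-f(\boldsymbol x)|\le K_{\boldsymbol x}\|\boldsymbol x_k-\boldsymbol x\|\to 0$, so $f(\boldsymbol x_k)\to f(\boldsymbol x)$. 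Chaining the two implications yields precisely the conclusion of Definition \ref{WC}: weak convergence $\Rightarrow$ norm convergence $\Rightarrow$ $f(\boldsymbol x_k)\to f(\boldsymbol x)$.

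I do not expect a genuine obstacle here: the statement is essentially the composition of ``in finite dimensions the weak and norm topologies agree'' with ``LLC $\Rightarrow$ continuous''. The only step that deserves a sentence of justification is the former, and testing weak convergence against the finitely many coordinate functionals settles it at once; everything else is bookkeeping.
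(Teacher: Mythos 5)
Your argument is correct and follows exactly the route the paper itself indicates (the lemma is stated with the remark that the weak and standard topologies coincide on $\mathbb R^n$, plus the fact that LLC implies norm continuity); you have simply written out the two steps — testing weak convergence against the coordinate functionals to get norm convergence, then applying the local Lipschitz estimate — that the paper leaves to the cited reference. No gap.
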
 

We will use this fact later when establishing some equivalence between Lemma \ref{altCra} and Lemma \ref{Thmalt}.

Now we introduce convexity and sublinearity with respect to an arbitrary closed convex cone.

\begin{maar}\label{Scon}\cite{CraGlo}
A function $f: \mathbb R^n \rightarrow \mathbb R$ is $S$-\emph{convex} if, whenever $0 < \alpha < 1$ and $\boldsymbol{x}, \boldsymbol{y} \in \mathbb R^n $, we have
$$\alpha f(\boldsymbol{x}) + (1 - \alpha) f(\boldsymbol{y}) - f(\alpha \boldsymbol{x} + (1 - \alpha)\boldsymbol{y}) \in S\subseteq \mathbb R,$$
where $S$ is a closed convex cone.
\end{maar}

\begin{maar}\label{Ssublin}\cite{CraGlo}
A function $f: \mathbb R^n \rightarrow \mathbb R$ is $S$-\emph{sublinear} if it is $S$-convex and positively homogeneous of degree one that is $f(\alpha \boldsymbol{x}) = \alpha f(\boldsymbol{x})\quad \forall\, \alpha\ge 0$.
\end{maar}

Note that when $S=\mathbb R_{\ge}$, $S$-convexity transforms into classical convexity as well as $S$-sublinearity transforms into usual sublinearity (subadditivity plus positive homogeneity). 

In order to formulate the next supplementary results assume that $\partial (\lambda f)=\lambda\partial f$ is a subdifferential of the function $\lambda f$, and $$\text{ray}\, S=\{\lambda:\ \lambda y\ge 0\ \ \forall\, y\in  S\}$$ is one-dimensional cone (ray)  dual to $S$. It is obvious that $\text{ray}\, S=\mathbb R_{\ge}$ if $S=\mathbb R_{\ge}$, and hence, $\lambda\ge 0$. 

The following lemma states two mutually exclusive properties for $S$-sublinear weakly continuous functions.

\begin{lemma}\label{altCra}\cite{CraGlo} (Craven\&Glover's lemma of alternative)
Let $f: \mathbb R^n \rightarrow \mathbb R$ be $S$-sublinear and weakly continuous, and let $z \in \mathbb R$. Then exactly one of the following is satisfied:
\begin{enumerate}
\item[(i)] $\exists\ \boldsymbol{x} \in \mathbb R^n\ \textnormal{such that}\ -f(\boldsymbol{x}) + z \in S$;
\item[(ii)] $(\boldsymbol{0}, -1)^T \in \mathrm{cl}\Big(\bigcup_{\lambda \in \text{ray}\, S} \big(\lambda\partial f(\boldsymbol{0}) \times \big\{\lambda z\big\}\big)\Big)$.
\end{enumerate}
\end{lemma}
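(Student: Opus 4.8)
The statement is a Gordan/Farkas-type theorem of the alternative, so the plan is to prove it with a separating-hyperplane argument, preceded by the elementary observation that (i) and (ii) cannot hold at once. For that observation, suppose (i) holds with witness $\boldsymbol{\bar x}$, so $z-f(\boldsymbol{\bar x})\in S$, and suppose also (ii), so that there are $\lambda_k\in\text{ray}\,S$ and $\boldsymbol\xi_k\in\partial f(\boldsymbol 0)$ with $\lambda_k\boldsymbol\xi_k\to\boldsymbol 0$ and $\lambda_k z\to-1$. Using $\boldsymbol\xi_k\in\partial f(\boldsymbol 0)$, the definition of Clarke's subdifferential (Definition \ref{subdif}), positive homogeneity of $f$ (so $f(\boldsymbol 0)=0$), and $S$-sublinearity, one gets $\boldsymbol\xi_k^T\boldsymbol{\bar x}\le f(\boldsymbol{\bar x})$. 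Since $\lambda_k\in\text{ray}\,S$ and $z-f(\boldsymbol{\bar x})\in S$ we also have $\lambda_k\ge 0$ and $\lambda_k\big(z-f(\boldsymbol{\bar x})\big)\ge 0$, whence $\lambda_k\boldsymbol\xi_k^T\boldsymbol{\bar x}\le\lambda_k f(\boldsymbol{\bar x})\le\lambda_k z$; letting $k\to\infty$ gives $0\le-1$, a contradiction.

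For the substantive implication I would argue the contrapositive: assuming (i) fails, produce (ii). Denote by $T$ the closed set appearing in (ii). It is convex — a convex combination of points $(\lambda_i\boldsymbol\xi_i,\lambda_i z)$ is again of this form, by convexity of $\partial f(\boldsymbol 0)$ and the cone structure of $\text{ray}\,S$ — it is nonempty (it contains $(\boldsymbol 0,0)$), and $\partial f(\boldsymbol 0)$ is compact as the Clarke subdifferential of a function LLC at $\boldsymbol 0$. Suppose, for contradiction, $(\boldsymbol 0,-1)^T\notin T$. By the separation theorem there are $(\boldsymbol d,\tau)\in\mathbb R^n\times\mathbb R$ and $\alpha\in\mathbb R$ with $-\tau<\alpha\le\lambda(\boldsymbol d^T\boldsymbol\xi+\tau z)$ for all $\lambda\in\text{ray}\,S$ and $\boldsymbol\xi\in\partial f(\boldsymbol 0)$. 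Taking $\lambda=0$ forces $\alpha\le 0$ and hence $\tau>0$; letting $\lambda$ run to infinity along the cone $\text{ray}\,S$ while $\boldsymbol\xi$ ranges over the compact set $\partial f(\boldsymbol 0)$ forces $\boldsymbol d^T\boldsymbol\xi+\tau z\ge 0$ for all $\boldsymbol\xi\in\partial f(\boldsymbol 0)$, i.e. $\sup_{\boldsymbol\xi\in\partial f(\boldsymbol 0)}(-\boldsymbol d)^T\boldsymbol\xi\le\tau z$.

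Because $f$ is $S$-sublinear and weakly — hence, in $\mathbb R^n$, ordinarily — continuous, $f$ is recoverable from $\partial f(\boldsymbol 0)$ as its support function, so the previous inequality reads $f(-\boldsymbol d)\le\tau z$; dividing by $\tau>0$ and invoking positive homogeneity gives $f(-\boldsymbol d/\tau)\le z$, that is $z-f(-\boldsymbol d/\tau)\in S$. Then $\boldsymbol x:=-\boldsymbol d/\tau$ realizes (i), contradicting the assumption; hence $(\boldsymbol 0,-1)^T\in T$ and (ii) holds.

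The step I expect to be the main obstacle is exactly this last one — turning the separating functional $(\boldsymbol d,\tau)$ into a feasible point of (i). It is there that the $S$-sublinearity and the (weak) continuity are genuinely needed: one must express the supremum over $\partial f(\boldsymbol 0)$ in terms of $f$ with no closure gap, and for a general closed convex cone $S$ one must separately track the shape of $\text{ray}\,S$ (a half-line, $\{0\}$, or all of $\mathbb R$), the last two cases — where $f$ is affine or the inclusion in $S$ is vacuous — being settled directly rather than by separation. Keeping the sign conventions forced by membership in $S$ and in $\text{ray}\,S$ consistent throughout is the one piece of bookkeeping that needs care.
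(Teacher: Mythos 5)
The paper offers no proof of this lemma at all: it is imported verbatim from Craven and Glover \cite{CraGlo}, so there is no internal argument to compare yours against. Taken on its own terms, your proof is sound and is the natural self-contained route: mutual exclusion of (i) and (ii) by combining the subgradient inequality at $\boldsymbol 0$ (valid because $f^o(\boldsymbol 0;\boldsymbol d)=f(\boldsymbol d)$ for a continuous sublinear $f$ with $f(\boldsymbol 0)=0$) with the dual-cone pairing $\lambda_k\big(z-f(\boldsymbol{\bar x})\big)\ge 0$ and passing to the limit; and ``not (i) $\Rightarrow$ (ii)'' by strongly separating $(\boldsymbol 0,-1)^T$ from the closed convex cone in (ii) and converting the separating functional into a feasible point of (i) through the support-function identity $f(\boldsymbol d)=\sup_{\boldsymbol\xi\in\partial f(\boldsymbol 0)}\boldsymbol\xi^T\boldsymbol d$ --- which is indeed exactly where continuity and sublinearity are consumed. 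Two items you flag but do not execute would need executing for the statement as written over an arbitrary closed convex cone $S\subseteq\mathbb R$: for $S=\mathbb R_{\le}$ the function is concave, the subgradient inequality reverses to $\boldsymbol\xi_k^T\boldsymbol{\bar x}\ge f(\boldsymbol{\bar x})$ and $\lambda_k\le 0$, so your displayed chain survives only after both intermediate inequalities are flipped consistently; and the degenerate cones $S=\{0\}$ (where $\text{ray}\,S=\mathbb R$, so your convexity argument for the set in (ii) needs the fact that $f$ is then linear and $\partial f(\boldsymbol 0)$ a singleton) and $S=\mathbb R$ (where (i) holds vacuously and (ii) fails vacuously) must be checked directly. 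Since the paper only ever invokes the lemma with $S=\mathbb R_{\ge}$, namely in Lemma \ref{Thmalt}, your main-line argument covers the case that actually matters here.
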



The following known result is used as a characterization of invexity generalized for nonsmooth functions.
\begin{theorem}\label{3.1.2}{\cite{Rei}} Let $f:\mathbb R^n \rightarrow \mathbb R$ be LLC . For each $\boldsymbol u \in \mathbb R^n$, assume that for every $\boldsymbol x \in \mathbb R^n$, the convex cone in $\mathbb R^n\times\mathbb R$ (epigraphical tangent cone or a cone representation of the Clarke's subdifferential):

\begin{equation}\label{Contingent coneclosed}
  K_{\boldsymbol x}(\boldsymbol u) = \bigcup_{\lambda \geq 0} \Big(\lambda \partial f(\boldsymbol u) \times \big\{\lambda(f(\boldsymbol x) - f(\boldsymbol u))\big\}\Big)
\end{equation}
is closed. Then $f$ is invex if and only if every stationary point (subdifferential contains zero) is a global minimum of $f$ over $\mathbb R^n$.
\end{theorem}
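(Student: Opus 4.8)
The plan is to read the equivalence off from the Craven--Glover lemma of alternative (Lemma \ref{altCra}), applied to the sublinear function $\boldsymbol d\mapsto f^o(\boldsymbol u;\boldsymbol d)$, after disposing of the trivial implication straight from the definition of invexity.

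First I would prove ``$f$ invex $\Rightarrow$ every stationary point is a global minimum''. If $f$ is invex with respect to some $\boldsymbol\eta$ and $\boldsymbol 0\in\partial f(\boldsymbol u)$, then taking $\boldsymbol\xi=\boldsymbol 0$ in Definition \ref{invdef} gives $f(\boldsymbol x)-f(\boldsymbol u)\ge\boldsymbol 0^T\boldsymbol\eta(\boldsymbol x;\boldsymbol u)=0$ for every $\boldsymbol x\in\mathbb R^n$, so $\boldsymbol u$ is a global minimizer; this half uses neither Lemma \ref{altCra} nor the closedness hypothesis.

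For the converse, assume every stationary point is a global minimizer, fix $\boldsymbol u,\boldsymbol x\in\mathbb R^n$, and apply Lemma \ref{altCra} with $S=\mathbb R_{\ge}$ (so $\text{ray}\,S=\mathbb R_{\ge}$), with $z:=f(\boldsymbol x)-f(\boldsymbol u)$, and with the function $g(\boldsymbol d):=f^o(\boldsymbol u;\boldsymbol d)$. The hypotheses of the lemma must be checked: since $f$ is LLC, $g$ is finite-valued, positively homogeneous and subadditive, hence sublinear, hence globally Lipschitz, hence weakly continuous by Lemma \ref{LLCWC}; and $g$ is the support function of the nonempty compact convex set $\partial f(\boldsymbol u)$, so its subdifferential at the origin is $\partial g(\boldsymbol 0)=\partial f(\boldsymbol u)$. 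With these identifications, alternative (i) of Lemma \ref{altCra} becomes ``there is $\boldsymbol d$ with $f^o(\boldsymbol u;\boldsymbol d)\le f(\boldsymbol x)-f(\boldsymbol u)$'', while alternative (ii) becomes ``$(\boldsymbol 0,-1)^T\in\mathrm{cl}(K_{\boldsymbol x}(\boldsymbol u))$'' with $K_{\boldsymbol x}(\boldsymbol u)$ exactly the cone (\ref{Contingent coneclosed}).

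The closedness hypothesis enters precisely here: it lets me replace $\mathrm{cl}(K_{\boldsymbol x}(\boldsymbol u))$ by $K_{\boldsymbol x}(\boldsymbol u)$, so (ii) means $\lambda\boldsymbol\xi=\boldsymbol 0$ and $\lambda(f(\boldsymbol x)-f(\boldsymbol u))=-1$ for some $\lambda\ge 0$, $\boldsymbol\xi\in\partial f(\boldsymbol u)$; the second relation forces $\lambda>0$ and $f(\boldsymbol x)<f(\boldsymbol u)$, and then the first forces $\boldsymbol\xi=\boldsymbol 0\in\partial f(\boldsymbol u)$. Thus (ii) is equivalent to ``$\boldsymbol u$ is stationary and $f(\boldsymbol x)<f(\boldsymbol u)$'', which under the standing assumption can never occur; hence (i) holds for every pair $(\boldsymbol x,\boldsymbol u)$, and choosing one admissible $\boldsymbol d$ per pair and setting $\boldsymbol\eta(\boldsymbol x;\boldsymbol u):=\boldsymbol d$ yields, via $f^o(\boldsymbol u;\boldsymbol\eta(\boldsymbol x;\boldsymbol u))=\max_{\boldsymbol\xi\in\partial f(\boldsymbol u)}\boldsymbol\xi^T\boldsymbol\eta(\boldsymbol x;\boldsymbol u)$, the invexity inequality $\boldsymbol\xi^T\boldsymbol\eta(\boldsymbol x;\boldsymbol u)\le f(\boldsymbol x)-f(\boldsymbol u)$ for all $\boldsymbol\xi\in\partial f(\boldsymbol u)$. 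I expect the main obstacle to be the faithful translation of the abstract alternative into these two concrete statements — checking that $g=f^o(\boldsymbol u;\cdot)$ meets the lemma's hypotheses, identifying $\partial g(\boldsymbol 0)$ with $\partial f(\boldsymbol u)$, and isolating the exact step at which closedness of $K_{\boldsymbol x}(\boldsymbol u)$ is used. The construction of $\boldsymbol\eta$ is otherwise routine, since no regularity of $\boldsymbol\eta$ is demanded, so a pointwise selection of admissible directions suffices; in finite dimensions one could even bypass the lemma and argue the converse directly by strictly separating $\boldsymbol 0$ from the compact convex set $\partial f(\boldsymbol u)$ whenever $f(\boldsymbol x)<f(\boldsymbol u)$.
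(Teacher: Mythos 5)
Your proposal is correct, and it follows exactly the route the paper itself relies on: the paper states Theorem \ref{3.1.2} as a cited result of Reiland, but its own Lemma \ref{Thmalt} and the proof of Theorem \ref{MTh} implement precisely your argument --- apply the Craven--Glover alternative (Lemma \ref{altCra}) with $S=\mathbb R_{\ge}$ to the sublinear, weakly continuous function $\boldsymbol d\mapsto f^o(\boldsymbol u;\boldsymbol d)$, identify $\partial_{conv}f^o(\boldsymbol u;\cdot)(\boldsymbol 0)=\partial f(\boldsymbol u)$, use closedness of $K_{\boldsymbol x}(\boldsymbol u)$ to drop the closure in alternative (ii), and observe that (ii) is exactly ``$\boldsymbol u$ stationary yet $f(\boldsymbol x)<f(\boldsymbol u)$''. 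Your checks of the lemma's hypotheses and the easy direction via $\boldsymbol\xi=\boldsymbol 0$ are all sound, so nothing further is needed.
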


Let us note that the assumption on closedness of $K_{\boldsymbol x}(\boldsymbol u)$ is not very restrictive. It only allows to exclude from consideration "trivial" cases since a nontrivial convex cone (one that doesn't cover the whole space) cannot be open. It will always contain its boundary (e.g., the apex or "edges" of the cone), making it at least closed or not open.

\begin{rem}
In the differentiable case, the convex cone defined in (\ref{Contingent coneclosed}) is always closed. For a differentiable single-valued function $f$,  we obtain famous result of \cite{CraGlo} that $f:\mathbb R^n\rightarrow \mathbb R$ is invex if and only if every stationary point (i.e. $\boldsymbol u$ such that $\nabla f(\boldsymbol u)=\boldsymbol 0$) is a global minimum of $f$ over $\mathbb R^n$.
\end{rem}

Now we are ready to adapt the Craven\&Glover's lemma of alternative (Lemma \ref{altCra}) to LLC functions and use it later in the proof of Theorem \ref{2.2.6}. 

\begin{lemma}\label{Thmalt}
Let $f: \mathbb R^n \rightarrow \mathbb R$ be LLC, $\boldsymbol x,\boldsymbol u \in \mathbb R^n$, and let $K_{\boldsymbol x}(\boldsymbol u)$ defined in (\ref{Contingent coneclosed}) be closed. Then exactly one of the following is satisfied:
\begin{enumerate}
\item[(i)] $\exists\ \boldsymbol{\eta}(\boldsymbol x;\boldsymbol u) \in \mathbb R^n$ such that $$f(\boldsymbol x) - f(\boldsymbol u) \geq \boldsymbol \xi^T \boldsymbol \eta(\boldsymbol x; \boldsymbol u),$$ or equivalently, $$f(\boldsymbol x) - f(\boldsymbol u) \geq  f^o(\boldsymbol u; \boldsymbol \eta(\boldsymbol x; \boldsymbol u));$$
\item[(ii)] $(\boldsymbol{0}, -1)^T \in K_{\boldsymbol x}(\boldsymbol u)$.
\end{enumerate}
\end{lemma}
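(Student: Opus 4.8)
The plan is to obtain this statement as a direct consequence of Craven \& Glover's lemma of alternative (Lemma \ref{altCra}), applied to a cleverly chosen sublinear function. Fix $\boldsymbol u$ and consider the map
$g:\mathbb R^n\to\mathbb R$, $\ g(\boldsymbol d):=f^o(\boldsymbol u;\boldsymbol d)$,
i.e. Clarke's generalized directional derivative of $f$ at $\boldsymbol u$ viewed as a function of the direction. I would then invoke Lemma \ref{altCra} for $g$ with $S=\mathbb R_{\geq}$ and $z:=f(\boldsymbol x)-f(\boldsymbol u)$, and check that, under this substitution, alternatives (i) and (ii) of Lemma \ref{altCra} become precisely alternatives (i) and (ii) of the present lemma; the ``exactly one'' conclusion then transfers verbatim.

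The first step is to verify that $g$ satisfies the hypotheses of Lemma \ref{altCra}. By the classical properties of Clarke's generalized directional derivative (see \cite{Cla}), $\boldsymbol d\mapsto f^o(\boldsymbol u;\boldsymbol d)$ is positively homogeneous of degree one and subadditive, hence $\mathbb R_{\geq}$-sublinear in the sense of Definitions \ref{Scon}--\ref{Ssublin}; being subadditive and positively homogeneous it is in particular convex. Moreover $g$ is globally Lipschitz in $\boldsymbol d$ (with the local Lipschitz constant of $f$ near $\boldsymbol u$), hence LLC, hence weakly continuous by Lemma \ref{LLCWC} --- this is exactly where that fact is used, as announced in the text. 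Since $S=\mathbb R_{\geq}$, we also have $\text{ray}\,S=\mathbb R_{\geq}$, so the union in Lemma \ref{altCra}(ii) ranges over $\lambda\ge 0$.

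The second step is to translate the two alternatives. Condition (i) of Lemma \ref{altCra} reads: there exists $\boldsymbol d\in\mathbb R^n$ with $-g(\boldsymbol d)+z\in\mathbb R_{\geq}$, that is $f(\boldsymbol x)-f(\boldsymbol u)\ge f^o(\boldsymbol u;\boldsymbol d)$; setting $\boldsymbol\eta(\boldsymbol x;\boldsymbol u):=\boldsymbol d$ yields item (i) here, and the equivalence of the two displayed formulations of (i) follows from $f^o(\boldsymbol u;\boldsymbol d)=\max_{\boldsymbol\xi\in\partial f(\boldsymbol u)}\boldsymbol\xi^T\boldsymbol d$ (the support-function description of $\partial f(\boldsymbol u)$, cf. Definition \ref{subdif}). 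For condition (ii), I need $\partial g(\boldsymbol 0)=\partial f(\boldsymbol u)$: since $g$ is finite-valued and convex, its Clarke subdifferential coincides with its convex subdifferential, and $g$ is the support function of the nonempty compact convex set $\partial f(\boldsymbol u)$, whose convex subdifferential at $\boldsymbol 0$ is that set itself. Substituting $\partial g(\boldsymbol 0)=\partial f(\boldsymbol u)$ and $z=f(\boldsymbol x)-f(\boldsymbol u)$, condition (ii) of Lemma \ref{altCra} becomes $(\boldsymbol 0,-1)^T\in\mathrm{cl}\big(\bigcup_{\lambda\ge 0}(\lambda\partial f(\boldsymbol u)\times\{\lambda(f(\boldsymbol x)-f(\boldsymbol u))\})\big)=\mathrm{cl}\big(K_{\boldsymbol x}(\boldsymbol u)\big)$ by definition (\ref{Contingent coneclosed}); and since $K_{\boldsymbol x}(\boldsymbol u)$ is assumed closed, this is exactly $(\boldsymbol 0,-1)^T\in K_{\boldsymbol x}(\boldsymbol u)$, which is item (ii).

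I expect the main obstacle to be bookkeeping rather than any genuine analytic difficulty: one must correctly identify $\partial g(\boldsymbol 0)$ with $\partial f(\boldsymbol u)$ and make sure the closure appearing in Lemma \ref{altCra}(ii) is dissolved precisely by the closedness hypothesis on $K_{\boldsymbol x}(\boldsymbol u)$. The substantive inputs --- sublinearity and Lipschitz continuity of $f^o(\boldsymbol u;\cdot)$, its weak continuity via Lemma \ref{LLCWC}, and the support-function representation of the Clarke subdifferential --- are all classical and require no new argument.
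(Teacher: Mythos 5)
Your proposal is correct and follows essentially the same route as the paper's own proof: both apply Craven\&Glover's lemma of alternative with $S=\mathbb R_{\ge}$ to the sublinear, weakly continuous function $\boldsymbol d\mapsto f^o(\boldsymbol u;\boldsymbol d)$, take $z=f(\boldsymbol x)-f(\boldsymbol u)$, and identify the subdifferential at $\boldsymbol 0$ of that function with $\partial f(\boldsymbol u)$ so that the closedness hypothesis on $K_{\boldsymbol x}(\boldsymbol u)$ removes the closure in alternative (ii). Your write-up is in fact somewhat more explicit than the paper's on the support-function identification and on dissolving the closure, but there is no substantive difference in approach.
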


\noindent
\begin{proof}
Taking into account Lemma \ref{LLCWC}, this result is obtained by setting $S=\mathbb R_{\ge}$ in Lemma \ref{altCra} and substituting $\boldsymbol{x}$ with $\boldsymbol \eta(\boldsymbol x;\boldsymbol u)$, $f(\boldsymbol{x})$ with $f^o(\boldsymbol{u};\boldsymbol \eta(\boldsymbol x;\boldsymbol u))$, and $z$ with $f(\boldsymbol{x})-f(\boldsymbol{u})$, respectively. 
The assumptions in Lemma \ref{altCra} on sublinearity (subadditivity and positive homogeneity) of function $f$ transform into assumptions of sublinearity of function $f^o(\boldsymbol{u}; \boldsymbol \eta(\boldsymbol x;\boldsymbol u))$ as a function of argument $\boldsymbol \eta(\boldsymbol x;\boldsymbol u)$, which always holds for all $\boldsymbol u\in \mathbb R^n$ in the case of Clarke's generalized directional derivative \cite{Cla}.
We also use a theorem from \cite{Cla} stating that for an LLC function $f$ at $\boldsymbol{u}$, we have $\partial f(\boldsymbol{u}) = \partial_{conv} f^o(\boldsymbol{u};\boldsymbol{0})$. 
\end{proof}

A careful reader may notice that condition (i) in Lemma \ref{Thmalt} will be equivalent to invexity of LLC function $f$ when fulfilled for all $\boldsymbol x,\boldsymbol u\in \mathbb R^n$.

Now we consider the following vector-valued function minimization problem
\begin{equation}\tag{VMP}
\min\limits_{ \boldsymbol x \in \mathbb R^n}\ \boldsymbol F(\boldsymbol x) = (f_1(\boldsymbol x),\dots,f_p(\boldsymbol x))^T\in\mathbb R^p.    
\end{equation}

Let us first define the most fundamental principle of optimality in vector optimization, known as Pareto optimality \cite{Pareto}.
\begin{maar}\label{Par}
A point $\boldsymbol x^* \in \mathbb R^n$ is said to be a \emph{Pareto optimal solution} of (VMP) if there exist no $\boldsymbol x \in \mathbb R^n$ such that
$$f_i(\boldsymbol x) \leq f_i(\boldsymbol x^*)\ \text{for all}\ i=1,\dots,p$$
and
$$f_i(\boldsymbol x) < f_i(\boldsymbol x^*)\ \textnormal{for some}\ i=1,\dots,p.$$
\end{maar}
We can modify the Pareto optimality principle by requiring strict inequality for all components in the vector dominance condition. This produces a milder optimality principle known as weak Pareto optimality. 

\begin{maar}\label{wPar}
A point $\boldsymbol x^* \in \mathbb R^n$ is said to be a \emph{weakly Pareto optimal solution} of (VMP) if there exist no $\boldsymbol x \in \mathbb R^n$ such that
$$f_i(\boldsymbol x) < f_i(\boldsymbol x^*)\ \text{for all}\ i=1,\dots,p.$$
\end{maar}
Therefore, it is easy to see that every Pareto optimal solution is also weakly Pareto optimal, but the opposite may not necessarily be true.

The concept of V-invexity was generalized for nonsmooth LLC functions in \cite{MisGio}.

\begin{maar}\label{vinvex} \cite{MisGio} Let $f_i : \mathbb R^n \rightarrow \mathbb R$ be LLC functions for all $i=1,\dots,p$. A vector-valued function $$\boldsymbol F(\boldsymbol x) = (f_1(\boldsymbol x),\dots,f_p(\boldsymbol x))^T$$ is \emph{V-invex} (with respect to $\boldsymbol \eta$) if there exist a function $\boldsymbol \eta : \mathbb R^n \times \mathbb R^n \rightarrow \mathbb R^n$ and a vector $\boldsymbol \beta$ with components $\beta_i : \mathbb R^n \times \mathbb R^n \rightarrow \mathbb R_{>0}$ such that for all $\boldsymbol x, \boldsymbol u \in \mathbb R^n$, for all $i=1,\dots,p$  and for all $\boldsymbol \xi^i \in \partial f_i(\boldsymbol u)$ we have
$$f_i(\boldsymbol x) - f_i(\boldsymbol u) \geq \beta_i(\boldsymbol x; \boldsymbol u) \boldsymbol (\boldsymbol \xi^i)^T \boldsymbol \eta(\boldsymbol x; \boldsymbol u),$$ or equivalently,
 $$f_i(\boldsymbol x) - f_i(\boldsymbol u) \geq  \beta_i(\boldsymbol x; \boldsymbol u)f_i^o(\boldsymbol u; \boldsymbol \eta(\boldsymbol x; \boldsymbol u)).$$

\end{maar}

Here we can write $\beta_i(\boldsymbol x; \boldsymbol u)\boldsymbol \eta(\boldsymbol x; \boldsymbol u) = \boldsymbol \eta_i(\boldsymbol x; \boldsymbol u)$, which shows that every $i$-th component function has a different $\boldsymbol \eta$-function, i.e. $\boldsymbol \eta_i$.  It is evident that a vector function whose components are invex with the same $\boldsymbol \eta$ is V-invex. To see that, it suffices to set $\beta_i \equiv 1$ for all $i=1,\dots,p$.

Note that Definition \ref{vinvex} was first introduced for differentiable functions in \cite{JeyMon}. It was shown by Jeyakumar and Mond \cite{JeyMon} that for a V-invex differentiable function $\boldsymbol F = (f_1,\dots,f_p)^T$ the point $\boldsymbol u \in \mathbb R^n$ is a weak Pareto optimal solution of (VMP) if and only if there exists $\boldsymbol \lambda \in \mathbb R^p_{\ge}\backslash \{\boldsymbol 0 \}$ such that 
\begin{equation}\label{taunabla}
\sum_{i=1}^p \lambda_i \nabla f_i(\boldsymbol u) = \boldsymbol 0.
\end{equation}

Let us extend (\ref{taunabla}) to nonsmooth functions. We will first need the following result (known as Gordan’s lemma of alternative).

\begin{lemma}\label{theoremGordan}\cite{Gordan} (Gordan's lemma of alternative A). For a given $m \times n$ matrix $A$  exactly one of the following two systems (but never both) has a solution: 
\begin{itemize}   
\item[(i)] primal system 
$A\boldsymbol{x}<(>)\,\boldsymbol{0}$ has a solution $\boldsymbol{x} \in \mathbb R^n$; 
\item[(ii)] dual system 
$A^T \boldsymbol{\lambda} = \boldsymbol{0},\ \boldsymbol{\lambda}\ge \boldsymbol{0},\ \boldsymbol{\lambda}\ne \boldsymbol{0}$ 
has a solution $\boldsymbol{\lambda} \in \mathbb R^m$.
\end{itemize}
\end{lemma}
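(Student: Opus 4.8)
The plan is to establish the dichotomy in two steps: (a) the two systems are never simultaneously solvable, and (b) at least one of them is always solvable; together these give the "exactly one" conclusion. I would argue the case of the strict inequality $A\boldsymbol x<\boldsymbol 0$ and observe that the parenthetical "$>$" variant follows verbatim after replacing $A$ by $-A$.

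Step (a) is a one-line sign argument. If $\boldsymbol x$ solved (i) and $\boldsymbol\lambda$ solved (ii), then since $\boldsymbol\lambda\ge\boldsymbol 0$, $\boldsymbol\lambda\ne\boldsymbol 0$ and every component of $A\boldsymbol x$ is strictly negative, we would get $\boldsymbol\lambda^{T}(A\boldsymbol x)=\sum_{i}\lambda_i(A\boldsymbol x)_i<0$; but $\boldsymbol\lambda^{T}(A\boldsymbol x)=(A^{T}\boldsymbol\lambda)^{T}\boldsymbol x=\boldsymbol 0^{T}\boldsymbol x=0$, a contradiction.

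Step (b) carries the real content, and I would do it by separation. Assume (i) has no solution. Then the linear subspace $L=\{A\boldsymbol x:\boldsymbol x\in\mathbb R^{n}\}\subseteq\mathbb R^{m}$ is disjoint from the nonempty open convex cone $C=\{\boldsymbol w\in\mathbb R^{m}:w_i<0\ \text{for all } i\}$. By the separating hyperplane theorem applied to the disjoint nonempty convex sets $L$ and $C$ (with $C$ open), there exist $\boldsymbol\lambda\in\mathbb R^{m}\setminus\{\boldsymbol 0\}$ and $\alpha\in\mathbb R$ with $\boldsymbol\lambda^{T}\boldsymbol v\ge\alpha$ for all $\boldsymbol v\in L$ and $\boldsymbol\lambda^{T}\boldsymbol w\le\alpha$ for all $\boldsymbol w\in C$. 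Since $L$ is a subspace, a linear functional bounded below on $L$ vanishes identically on $L$; hence $\boldsymbol\lambda^{T}(A\boldsymbol x)=0$ for all $\boldsymbol x$, i.e. $A^{T}\boldsymbol\lambda=\boldsymbol 0$, and consequently $\alpha\le 0$, so $\boldsymbol\lambda^{T}\boldsymbol w\le 0$ for every $\boldsymbol w$ with all coordinates negative. Passing to the limit along points of $C$ approaching the coordinate vector $-\boldsymbol e_i$ then yields $\lambda_i\ge 0$ for each $i$. Thus $\boldsymbol\lambda\ge\boldsymbol 0$, $\boldsymbol\lambda\ne\boldsymbol 0$ and $A^{T}\boldsymbol\lambda=\boldsymbol 0$, which is exactly system (ii).

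The one point to handle with care is the separation step: one must invoke the version of the separation theorem valid for a convex set and a linear (or affine) set, and then the subspace structure of $L$ is precisely what promotes the inequality $\boldsymbol\lambda^{T}\boldsymbol v\ge\alpha$ to the equality $A^{T}\boldsymbol\lambda=\boldsymbol 0$; everything else is routine sign-chasing. A thematically alternative route, closer to the subdifferential machinery used elsewhere in the paper, is to note that if (i) fails then $\boldsymbol 0$ minimizes $h(\boldsymbol x)=\max_i(A\boldsymbol x)_i$, whence $\boldsymbol 0\in\partial h(\boldsymbol 0)=\operatorname{conv}\{\text{rows of }A\}$, which immediately produces the multipliers $\boldsymbol\lambda$. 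Since the statement is entirely classical, simply citing \cite{Gordan} is also acceptable.
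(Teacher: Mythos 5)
The paper offers no proof of this lemma at all: it is stated as a classical result and simply attributed to \cite{Gordan}, exactly as your closing remark anticipates. Your self-contained argument is correct. Step (a) is the standard sign computation: $\boldsymbol\lambda^{T}(A\boldsymbol x)$ would have to be simultaneously $0$ and strictly negative. In step (b), separating the image subspace $L=A\mathbb R^{n}$ from the open negative orthant $C$, and then using the fact that a linear functional bounded below on a subspace must vanish on it, correctly yields $A^{T}\boldsymbol\lambda=\boldsymbol 0$; the limit toward $-\boldsymbol e_{i}$ gives $\lambda_{i}\ge 0$, and $\boldsymbol\lambda\ne\boldsymbol 0$ comes from the separation theorem itself. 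The reduction of the ``$>$'' variant to the ``$<$'' variant by replacing $A$ with $-A$ is also fine. Your alternative route via $\boldsymbol 0\in\partial h(\boldsymbol 0)=\operatorname{conv}\{\text{rows of }A\}$ for $h(\boldsymbol x)=\max_{i}(A\boldsymbol x)_{i}$ is equally valid and is in fact closer in spirit to the Clarke-subdifferential machinery used elsewhere in the paper. What your write-up buys over the paper's bare citation is self-containedness; there is no in-paper argument to compare against.
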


We will use Lemma \ref{theoremGordan} to prove the following result, specifying the criteria for weak Pareto optimality for (VMP) with V-invex functions.

\begin{theorem}\label{thmVinv} Let $\boldsymbol F : \mathbb R^n \rightarrow \mathbb R^p$ be V-invex with $f_i : \mathbb R^n \rightarrow \mathbb R$ be LLC functions for all $i=1,\dots,p$. Given $\boldsymbol u \in \mathbb R^n$, assume the cone $K_{\boldsymbol{x}}(\boldsymbol u)$ defined in (\ref{Contingent coneclosed}) be closed for every $\boldsymbol x \in \mathbb R^n$.
Then $\boldsymbol u$ is a weakly Pareto optimal solution of (VMP)  associated with the vector-valued function $\boldsymbol F$  if and only if for all $\boldsymbol \xi:=(\boldsymbol\xi^1,\dots,\boldsymbol \xi^p),$ $\boldsymbol \xi^i \in \partial f_i(\boldsymbol u),\ i = 1, \dots, p$
 there exists $\boldsymbol \lambda \in \mathbb R^p_{\ge}\backslash \{\boldsymbol 0 \}$ such that
\begin{equation}\label{tauksi}\sum_{i=1}^p \lambda_i \boldsymbol \xi^i = \boldsymbol 0.
\end{equation}
\end{theorem}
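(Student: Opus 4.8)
The plan is to prove the two directions separately, using the V-invexity assumption to convert statements about function values into statements about subgradients, and then applying Gordan's lemma of alternative (Lemma~\ref{theoremGordan}) to the matrix whose rows are the chosen subgradients $\boldsymbol\xi^i$.

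First I would prove the ``if'' direction. Suppose that for every selection $\boldsymbol\xi^i\in\partial f_i(\boldsymbol u)$ there is $\boldsymbol\lambda\in\mathbb R^p_{\ge}\setminus\{\boldsymbol 0\}$ with $\sum_{i=1}^p\lambda_i\boldsymbol\xi^i=\boldsymbol 0$. Assume for contradiction that $\boldsymbol u$ is \emph{not} weakly Pareto optimal, so there is $\boldsymbol x\in\mathbb R^n$ with $f_i(\boldsymbol x)<f_i(\boldsymbol u)$ for all $i$. Pick any $\boldsymbol\xi^i\in\partial f_i(\boldsymbol u)$; by V-invexity, $0> f_i(\boldsymbol x)-f_i(\boldsymbol u)\geq\beta_i(\boldsymbol x;\boldsymbol u)\,(\boldsymbol\xi^i)^T\boldsymbol\eta(\boldsymbol x;\boldsymbol u)$, and since $\beta_i>0$ this gives $(\boldsymbol\xi^i)^T\boldsymbol\eta(\boldsymbol x;\boldsymbol u)<0$ for every $i$. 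Thus with $A$ the $p\times n$ matrix with rows $(\boldsymbol\xi^i)^T$, the vector $\boldsymbol d=\boldsymbol\eta(\boldsymbol x;\boldsymbol u)$ solves the primal system $A\boldsymbol d<\boldsymbol 0$, so by Gordan's lemma the dual system $A^T\boldsymbol\lambda=\sum_i\lambda_i\boldsymbol\xi^i=\boldsymbol 0$, $\boldsymbol\lambda\ge\boldsymbol 0$, $\boldsymbol\lambda\ne\boldsymbol 0$ has no solution — contradicting the hypothesis applied to this particular selection.

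For the ``only if'' direction, assume $\boldsymbol u$ is weakly Pareto optimal and fix an arbitrary selection $\boldsymbol\xi^i\in\partial f_i(\boldsymbol u)$. I want to show the dual Gordan system is solvable; by Lemma~\ref{theoremGordan} it suffices to show the primal system $A\boldsymbol d<\boldsymbol 0$ has \emph{no} solution, i.e.\ there is no $\boldsymbol d\in\mathbb R^n$ with $(\boldsymbol\xi^i)^T\boldsymbol d<0$ for all $i$. This is exactly where the closedness assumption on $K_{\boldsymbol x}(\boldsymbol u)$ and the earlier machinery (Lemma~\ref{Thmalt} / Theorem~\ref{3.1.2}) enter: for each $i$, V-invexity of $f_i$ (after absorbing $\beta_i$ into $\boldsymbol\eta_i$) together with the closedness of the relevant cone puts us in case (i) of Lemma~\ref{Thmalt}, so one can produce a common-type direction argument; alternatively, suppose toward a contradiction that such a $\boldsymbol d$ exists, then using the relation $\partial f_i(\boldsymbol u)=\partial_{conv}f_i^o(\boldsymbol u;\boldsymbol 0)$ and the structure of $K_{\boldsymbol x}(\boldsymbol u)$ one shows the existence of an $\boldsymbol x$ (via the invexity characterization) with $f_i(\boldsymbol x)<f_i(\boldsymbol u)$ for all $i$, contradicting weak Pareto optimality of $\boldsymbol u$. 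Then Gordan's lemma yields the desired $\boldsymbol\lambda$.

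The main obstacle I anticipate is the ``only if'' direction: passing from ``no $\boldsymbol d$ with $(\boldsymbol\xi^i)^T\boldsymbol d<0$ for all $i$ fails'' back to the existence of an actual point $\boldsymbol x$ violating weak Pareto optimality is not immediate in the nonsmooth setting, because a negative inner product with one chosen subgradient need not give a descent direction for a merely Lipschitz function. This is precisely what the closedness of $K_{\boldsymbol x}(\boldsymbol u)$ and Lemma~\ref{Thmalt} are there to handle, so the delicate point is to invoke them uniformly over $i=1,\dots,p$ for a \emph{single} common $\boldsymbol\eta$ and to make sure the selection-dependence of $\boldsymbol\xi^i$ is treated correctly (the statement quantifies ``for all $\boldsymbol\xi$'', so one must be careful whether the contradiction is derived for the given selection or requires choosing extreme subgradients). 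I would also double-check that the strict positivity of the $\beta_i$ is used only to preserve the \emph{sign} of the inner products, which is all that Gordan's lemma needs.
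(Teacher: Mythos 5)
Your ``if'' direction (the multiplier condition implies weak Pareto optimality) is correct and coincides with the paper's argument: assuming a point $\boldsymbol x$ with $f_i(\boldsymbol x)<f_i(\boldsymbol u)$ for all $i$, V-invexity and $\beta_i>0$ give $(\boldsymbol\xi^i)^T\boldsymbol\eta(\boldsymbol x;\boldsymbol u)<0$ for every $i$, so $\boldsymbol\eta(\boldsymbol x;\boldsymbol u)$ solves the primal Gordan system and the dual system cannot have a solution, contradicting the hypothesis. This is exactly the paper's ($\Leftarrow$) step.

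The ``only if'' direction, however, is not a proof but a description of two candidate strategies, and the gap you yourself flag is real: from a selection $\boldsymbol\xi^i\in\partial f_i(\boldsymbol u)$ and a direction $\boldsymbol d$ with $(\boldsymbol\xi^i)^T\boldsymbol d<0$ for all $i$ one cannot, for merely LLC functions, manufacture a point $\boldsymbol x$ with $f_i(\boldsymbol x)<f_i(\boldsymbol u)$ for all $i$; a negative inner product with a single subgradient does not even imply $f_i^o(\boldsymbol u;\boldsymbol d)<0$, so no descent argument is available, and neither of your two sketched routes (``common-type direction argument'' or reconstructing $\boldsymbol x$ from $K_{\boldsymbol x}(\boldsymbol u)$) is carried out. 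The paper takes a different route here that sidesteps descent arguments entirely: it observes that weak Pareto optimality makes the combined system $0>\frac{1}{\beta_i(\boldsymbol x;\boldsymbol u)}(f_i(\boldsymbol x)-f_i(\boldsymbol u))\geq(\boldsymbol\xi^i)^T\boldsymbol\eta(\boldsymbol x;\boldsymbol u)$, $i=1,\dots,p$, inconsistent (the leftmost strict inequalities alone are already unsatisfiable over $\boldsymbol x$), and then invokes Gordan's lemma of alternative A (Lemma~\ref{theoremGordan}) with $\boldsymbol\eta(\boldsymbol x;\boldsymbol u)$ playing the role of the primal variable to produce $\boldsymbol\lambda$; notably, Lemma~\ref{Thmalt} and the closedness of $K_{\boldsymbol x}(\boldsymbol u)$ do not appear in the paper's proof of this direction at all. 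To repair your attempt you would need either to adopt that argument or to justify rigorously why the full linear system $A\boldsymbol d<\boldsymbol 0$ over all $\boldsymbol d\in\mathbb R^n$ (not merely over directions in the range of $\boldsymbol\eta(\cdot;\boldsymbol u)$) is inconsistent, which is the step currently missing.
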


\begin{proof} ($\Rightarrow$) Suppose that $\boldsymbol u$ is a weakly Pareto optimal solution of (VMP) associated with the vector-valued function $\boldsymbol F$. Then there exists no solution $\boldsymbol x\in \mathbb R^n$ such that $f_i(\boldsymbol x) < f_i(\boldsymbol u)$ for all $i=1,\dots, p$. In other words, the system $$\boldsymbol x \in \mathbb R^n,\ f_i(\boldsymbol x) < f_i(\boldsymbol u),\ i=1,\dots,p$$ is inconsistent. 
Since $\boldsymbol F$ is V-invex, there exist $\beta_i(\boldsymbol x; \boldsymbol u) > 0,\ i=1,\dots,p$ and $\boldsymbol \eta (\boldsymbol x; \boldsymbol u)$ such that for all $\boldsymbol \xi$, the system
\begin{equation}\label{etaxi}
\boldsymbol x \in \mathbb R^n,\ 0>\frac{1}{\beta_i(\boldsymbol x; \boldsymbol u)}(f_i(\boldsymbol x) - f_i(\boldsymbol u)) \geq (\boldsymbol{\xi}^i)^T \boldsymbol \eta(\boldsymbol x; \boldsymbol u),\ i=1,\dots,p
\end{equation}
is inconsistent. 
Then based on Gordan's lemma of alternative A we get that for all $\boldsymbol \xi$, 
there exists $\boldsymbol \lambda\in \mathbb R^p_{\ge}\backslash \{\boldsymbol 0 \}$ such that
$$\sum_{i=1}^p \lambda_i {\boldsymbol \xi}^i = \boldsymbol 0,
$$
in other words, (\ref{tauksi}) holds.

($\Leftarrow$) Assume that for all $\boldsymbol \xi$  there exists $\boldsymbol \lambda \in \mathbb R^p_{\ge}\backslash \{\boldsymbol 0 \}$ such that $\sum_{i=1}^p \lambda_i {\boldsymbol \xi^i} = \boldsymbol 0$.  Suppose that $\boldsymbol u$ is not a weakly Pareto optimal solution of (VMP). Then there exists $\boldsymbol x^* \in \mathbb R^n$ such that $f_i(\boldsymbol x^*) < f_i(\boldsymbol u),\ i=1,\dots,p$. Since $\boldsymbol F$ is V-invex, there exist $\beta_i(\boldsymbol x^*; \boldsymbol u) > 0,\ i=1,\dots,p$ and $\boldsymbol \eta (\boldsymbol x^*; \boldsymbol u)$ such that vector $\boldsymbol{x}^*$ is a solution for system (\ref{etaxi}) for all $\boldsymbol \xi$.
Since for all $\boldsymbol \xi$, we have  $\boldsymbol \lambda \in \mathbb R^p_{\ge}\backslash \{\boldsymbol 0 \}$, from the above one can get a contradiction with Gordan's lemma of alternative A.
\end{proof}

To formulate one more result, it is handy to use another version of Gordan's lemma of alternative given below.

\begin{lemma}\cite{Gordan} \label{Gordan's lemma of alternative B}(Gordan's lemma of alternative B). For a given $m \times n$ matrix $A$  exactly one of the following two systems (but never both) has a solution: 
\begin{itemize}
\item[(i)] primal system 
$A\boldsymbol{x}\le(\ge)\,\boldsymbol{0}$ has a solution $\boldsymbol{x} \in \mathbb R^n\backslash \{\boldsymbol 0\}$; 
\item[(ii)] dual system 
$A^T \boldsymbol{\lambda} = \boldsymbol{0}$ 
has a solution $\boldsymbol{\lambda} \in \mathbb R_{>}^m$.
\end{itemize}
\end{lemma}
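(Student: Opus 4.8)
The plan is to treat this as a theorem of the alternative of Stiemke's type and to argue in the usual two steps: first that the two systems cannot both be solvable, then that at least one of them is.

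\emph{Step 1 (mutual exclusion).} Suppose $\boldsymbol\lambda\in\mathbb R^m_{>}$ satisfied $A^T\boldsymbol\lambda=\boldsymbol 0$, and let $\boldsymbol x$ be any vector with $A\boldsymbol x\le\boldsymbol 0$. Pairing the two relations,
\[
0=(A^T\boldsymbol\lambda)^T\boldsymbol x=\boldsymbol\lambda^T(A\boldsymbol x)=\sum_{i=1}^m\lambda_i(A\boldsymbol x)_i .
\]
Since every $\lambda_i>0$ and every $(A\boldsymbol x)_i\le 0$, each summand is nonpositive and the sum is zero, so each summand vanishes; hence $A\boldsymbol x=\boldsymbol 0$. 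Thus when (ii) holds the primal inequality can only be met by vectors of $\ker A$, never by one producing a genuinely nonpositive, nonzero image $A\boldsymbol x$ — this is the precise sense in which (i) and (ii) are incompatible, and it is exactly the point one must watch when $A$ has a nontrivial kernel. The case $A\boldsymbol x\ge\boldsymbol 0$ follows by replacing $A$ with $-A$.

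\emph{Step 2 (at least one holds).} I would assume (i) fails, i.e.\ $A\boldsymbol x\le\boldsymbol 0\Rightarrow A\boldsymbol x=\boldsymbol 0$, and produce a strictly positive $\boldsymbol\lambda\in\ker A^T$. Put $L=\{A\boldsymbol x:\boldsymbol x\in\mathbb R^n\}=\operatorname{range}(A)$, a linear subspace of $\mathbb R^m$; the failure of (i) says $L$ meets the closed nonpositive orthant — equivalently the closed nonnegative orthant — only at $\boldsymbol 0$. Then the unit simplex $\Delta=\{\boldsymbol y\in\mathbb R^m:\ \boldsymbol y\ge\boldsymbol 0,\ \sum_i y_i=1\}$ is a nonempty compact convex set disjoint from the closed convex set $L$, so by the strict separating-hyperplane theorem there is $\boldsymbol\lambda\in\mathbb R^m$ with $\inf_{\boldsymbol y\in\Delta}\boldsymbol\lambda^T\boldsymbol y>\sup_{\boldsymbol v\in L}\boldsymbol\lambda^T\boldsymbol v$. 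Because $L$ is a subspace the right-hand supremum is finite only if $\boldsymbol\lambda^T\boldsymbol v=0$ for all $\boldsymbol v\in L$, i.e.\ $\boldsymbol\lambda\in L^{\perp}=\ker A^T$, so $A^T\boldsymbol\lambda=\boldsymbol 0$; then $\inf_{\boldsymbol y\in\Delta}\boldsymbol\lambda^T\boldsymbol y>0$, and testing against the standard basis vectors $\boldsymbol e_1,\dots,\boldsymbol e_m\in\Delta$ gives $\lambda_i>0$ for every $i$. This is (ii).

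If one prefers to reuse the machinery already set up rather than re-derive separation, an alternative for Step 2 is to reduce to Gordan's lemma of alternative A (Lemma~\ref{theoremGordan}) or to Farkas' lemma: the negation of (i) splits into the $m$ inconsistent Motzkin-type subsystems $\{\boldsymbol a_k^T\boldsymbol x<0,\ \boldsymbol a_i^T\boldsymbol x\le 0\ (i\ne k)\}$, where $\boldsymbol a_i$ is the $i$-th row of $A$, and the nonnegative multipliers these yield can be assembled into a single strictly positive $\boldsymbol\lambda$ with $A^T\boldsymbol\lambda=\boldsymbol 0$. I expect the only real obstacle, in either route, to be the bookkeeping of strict versus non-strict inequalities — the same subtlety already flagged in Step 1 — rather than anything deep; once the relevant sets are identified as the compact simplex $\Delta$ and the subspace $\operatorname{range}(A)$, the separation argument finishes the proof.
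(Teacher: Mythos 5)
The paper does not prove this lemma at all --- it is stated with a citation to Gordan and used as a black box --- so there is no in-paper argument to compare against; your proposal has to stand on its own. Taken on its own terms, your separation argument in Step~2 is the standard and correct proof of Stiemke's theorem of the alternative (separate the unit simplex $\Delta$ from the subspace $\operatorname{range}(A)$, use that a linear functional bounded above on a subspace vanishes on it, test against the $\boldsymbol e_i$), and Step~1 is the standard pairing argument for mutual exclusion. The sketched reduction to Motzkin-type subsystems would also work.

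The genuine issue is that what your two steps prove is not the statement as written. Step~1 only shows that a strictly positive $\boldsymbol\lambda\in\ker A^T$ forces $A\boldsymbol x=\boldsymbol 0$ for every $\boldsymbol x$ with $A\boldsymbol x\le\boldsymbol 0$; it does not exclude a \emph{nonzero} $\boldsymbol x$ in $\ker A$, which is what condition (i) literally asks for. Likewise, Step~2 starts from the hypothesis ``$A\boldsymbol x\le\boldsymbol 0\Rightarrow A\boldsymbol x=\boldsymbol 0$'', which is strictly weaker than the literal negation of (i), namely ``$A\boldsymbol x\le\boldsymbol 0\Rightarrow\boldsymbol x=\boldsymbol 0$''. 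So the alternative you actually establish is the Stiemke form, with primal condition ``$A\boldsymbol x\le\boldsymbol 0$ and $A\boldsymbol x\ne\boldsymbol 0$''. You half-notice this in Step~1 (``the point one must watch when $A$ has a nontrivial kernel'') but never resolve it, and it cannot be resolved: as literally stated the lemma is false whenever $\ker A\ne\{\boldsymbol 0\}$. For instance, with $m=n=2$ and $A=\left(\begin{smallmatrix}1&0\\-1&0\end{smallmatrix}\right)$, the vector $\boldsymbol x=(0,1)^T\ne\boldsymbol 0$ satisfies $A\boldsymbol x=\boldsymbol 0\le\boldsymbol 0$, so (i) holds, while $\boldsymbol\lambda=(1,1)^T>\boldsymbol 0$ satisfies $A^T\boldsymbol\lambda=\boldsymbol 0$, so (ii) holds too. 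To make your proof match a true statement you must either replace ``$\boldsymbol x\in\mathbb R^n\backslash\{\boldsymbol 0\}$'' by ``$A\boldsymbol x\ne\boldsymbol 0$'' in (i), or add the hypothesis that $A$ has trivial kernel; state explicitly which version you are proving.
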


The following result can be proven by analogy with the proof of Theorem \ref{thmVinv} and the help of Gordan’s lemma of alternative B (Lemma \ref{Gordan's lemma of alternative B}), 
It specifies the Pareto optimality criteria for (VMP) with V-invex functions.

\begin{theorem}\label{thmVinvpar} Let $\boldsymbol F : \mathbb R^n \rightarrow \mathbb R^p$ be V-invex with $f_i: \mathbb R^n \rightarrow \mathbb R$ be LLC functions for all $i=1,\dots,p$. Given $\boldsymbol u \in \mathbb R^n$, assume the cone $K_{\boldsymbol{x}}(\boldsymbol u)$ defined in (\ref{Contingent coneclosed}) be closed for every $\boldsymbol x \in \mathbb R^n$. Then $\boldsymbol u$ is a Pareto optimal solution of (VMP)  associated with the vector-valued function $\boldsymbol F$  if and only if for all $\boldsymbol \xi:=(\boldsymbol\xi^1,\dots,\boldsymbol \xi^p),$ $\boldsymbol \xi^i \in \partial f_i(\boldsymbol u),\ i = 1, \dots, p$
 there exists $\boldsymbol \lambda \in \mathbb R^p_{>0}$ such that
$$\sum_{i=1}^p \lambda_i \boldsymbol \xi^i = \boldsymbol 0.
$$
\end{theorem}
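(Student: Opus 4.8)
The plan is to mirror the proof of Theorem~\ref{thmVinv} almost verbatim, replacing Gordan's lemma of alternative A by Gordan's lemma of alternative B (Lemma~\ref{Gordan's lemma of alternative B}), and replacing the notion of weak Pareto optimality by Pareto optimality throughout. The key observation is that the logical structure is identical: negation of the optimality condition yields an inconsistent (respectively consistent) inequality system, which is then translated through V-invexity into a system about the subgradients, and the two Gordan alternatives are exactly what is needed to switch between the primal ``dominance'' system and the dual ``convex combination of subgradients equals zero'' system.

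First I would establish the forward direction ($\Rightarrow$). Suppose $\boldsymbol u$ is a Pareto optimal solution of (VMP). By Definition~\ref{Par}, there is no $\boldsymbol x \in \mathbb R^n$ with $f_i(\boldsymbol x) \le f_i(\boldsymbol u)$ for all $i$ and $f_j(\boldsymbol x) < f_j(\boldsymbol u)$ for some $j$; equivalently, writing $\boldsymbol g(\boldsymbol x) := (f_1(\boldsymbol x) - f_1(\boldsymbol u), \dots, f_p(\boldsymbol x) - f_p(\boldsymbol u))^T$, there is no $\boldsymbol x$ with $\boldsymbol g(\boldsymbol x) \le \boldsymbol 0$ and $\boldsymbol g(\boldsymbol x) \ne \boldsymbol 0$. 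Since $\boldsymbol F$ is V-invex, fix any $\boldsymbol \xi = (\boldsymbol \xi^1,\dots,\boldsymbol \xi^p)$ with $\boldsymbol \xi^i \in \partial f_i(\boldsymbol u)$; then for every $\boldsymbol x$ and every $i$ we have $\tfrac{1}{\beta_i(\boldsymbol x;\boldsymbol u)}(f_i(\boldsymbol x) - f_i(\boldsymbol u)) \ge (\boldsymbol \xi^i)^T \boldsymbol \eta(\boldsymbol x; \boldsymbol u)$ with $\beta_i > 0$, so a vector $\boldsymbol d := \boldsymbol \eta(\boldsymbol x;\boldsymbol u)$ satisfying $(\boldsymbol \xi^i)^T \boldsymbol d \le 0$ for all $i$ with at least one strict inequality would produce, via V-invexity, an $\boldsymbol x$ dominating $\boldsymbol u$ in the Pareto sense — contradiction. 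Hence the primal system ``$A\boldsymbol d \le \boldsymbol 0$, $\boldsymbol d \ne \boldsymbol 0$'' with $A$ the $p \times n$ matrix whose rows are the $(\boldsymbol \xi^i)^T$ is inconsistent, and Gordan's lemma of alternative B gives $\boldsymbol \lambda \in \mathbb R^p_{>0}$ with $A^T \boldsymbol \lambda = \sum_{i=1}^p \lambda_i \boldsymbol \xi^i = \boldsymbol 0$, as required. The converse ($\Leftarrow$) is the contrapositive: if $\boldsymbol u$ is not Pareto optimal, pick $\boldsymbol x^*$ dominating it, feed it through V-invexity to get a solution of the primal system, and invoke the ``never both'' part of Lemma~\ref{Gordan's lemma of alternative B} to contradict the existence of $\boldsymbol \lambda \in \mathbb R^p_{>0}$ with $\sum \lambda_i \boldsymbol \xi^i = \boldsymbol 0$.

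The main subtlety — and the step I would be most careful about — is matching the exact shape of the dominance condition to the exact shape of the primal system in Gordan B, in particular the placement of the strict-versus-nonstrict inequalities and the ``$\boldsymbol d \ne \boldsymbol 0$'' constraint. Pareto optimality negated gives ``$\le \boldsymbol 0$ everywhere, $\ne \boldsymbol 0$'', which is precisely the primal system of Lemma~\ref{Gordan's lemma of alternative B}; one must check that the componentwise strictly-positive factors $\beta_i(\boldsymbol x;\boldsymbol u)$ neither create nor destroy the zero/nonzero and sign pattern when passing between $\boldsymbol g(\boldsymbol x)$ and the vector $(\,(\boldsymbol \xi^i)^T \boldsymbol \eta(\boldsymbol x;\boldsymbol u)\,)_i$, and also that $\boldsymbol \eta(\boldsymbol x;\boldsymbol u) \ne \boldsymbol 0$ is guaranteed (or can be arranged) exactly when $\boldsymbol g(\boldsymbol x) \ne \boldsymbol 0$, since a zero direction would make every $(\boldsymbol \xi^i)^T \boldsymbol d$ vanish. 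The closedness hypothesis on $K_{\boldsymbol x}(\boldsymbol u)$ plays the same background role as in Theorem~\ref{thmVinv} (ensuring the equivalence of the subgradient and directional-derivative formulations via Lemma~\ref{Thmalt} / Theorem~\ref{3.1.2}), so I would simply note that the argument goes through ``by analogy with the proof of Theorem~\ref{thmVinv}, using Gordan's lemma of alternative B in place of alternative A,'' and then spell out only the two inconsistency translations above, since the rest is routine.
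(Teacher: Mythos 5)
Your plan is exactly what the paper does: the paper gives no separate proof of Theorem~\ref{thmVinvpar}, stating only that it ``can be proven by analogy with the proof of Theorem~\ref{thmVinv} and the help of Gordan's lemma of alternative B,'' which is precisely the substitution you carry out (Pareto for weak Pareto, Lemma~\ref{Gordan's lemma of alternative B} for Lemma~\ref{theoremGordan}).

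One caution about the subtlety you flag at the end, because your proposed resolution points in the wrong direction. The mismatch between ``$\boldsymbol g(\boldsymbol x)\le \boldsymbol 0$, $\boldsymbol g(\boldsymbol x)\ne\boldsymbol 0$'' and the primal system of Lemma~\ref{Gordan's lemma of alternative B} cannot be repaired by arranging $\boldsymbol\eta(\boldsymbol x;\boldsymbol u)\ne\boldsymbol 0$: if the primal system is literally ``$A\boldsymbol d\le\boldsymbol 0$ with $\boldsymbol d\ne\boldsymbol 0$,'' then for $p<n$ the matrix $A$ with rows $(\boldsymbol\xi^i)^T$ has a nontrivial kernel, so that system is \emph{always} solvable and the forward direction could never produce $\boldsymbol\lambda\in\mathbb R^p_{>0}$. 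The lemma must instead be read in Stiemke's form, with primal system ``$A\boldsymbol d\le\boldsymbol 0$ and $A\boldsymbol d\ne\boldsymbol 0$'' (image nonzero, not variable nonzero). Under that reading the translation is clean in both directions: since each $\beta_i(\boldsymbol x;\boldsymbol u)>0$, the sign pattern of $\boldsymbol g(\boldsymbol x)$ is preserved, a dominating $\boldsymbol x^*$ yields $(\boldsymbol\xi^i)^T\boldsymbol\eta\le 0$ for all $i$ with strict inequality for the dominating index $j$ (hence $A\boldsymbol\eta\ne\boldsymbol 0$ automatically, with no separate nonvanishing argument needed), and the ``never both'' clause gives the contradiction for ($\Leftarrow$). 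With that correction your argument inherits exactly the strengths and weaknesses of the paper's proof of Theorem~\ref{thmVinv}, including the step where inconsistency of the system in $\boldsymbol x$ is identified with inconsistency of the linear system in the direction variable.
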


Note that Definition \ref{vinvex} transforms into the definition of invexity (see Definition \ref{invdef}) for the case $p=1$. Then the statement of Theorem \ref{thmVinvpar} transforms into the necessity statement of Theorem \ref{3.1.2}.

\section{Main result}\label{sec4}
In this section we generalize a result from \cite{Leg}  concerning the characterization families of differentiable functions that are invex with respect to a common function $\boldsymbol{\eta}$. For the completeness of the paper, we start with reproducing the result from \cite{Leg} along with the proof. First, we need the following known result.

\begin{lemma}\cite{Gale}\label{Gale's lemma of alternative for linear inequalities} (Gale's lemma of alternative). For a given $m \times n$ matrix $A$ and a given column vector $\boldsymbol{b} \in \mathbb R^m$, exactly one of the following two linear systems (but never both) has a solution: 
\begin{itemize}
\item[(i)]
primal system $A\boldsymbol{x}\leq \boldsymbol{b}$
has a solution $\boldsymbol{x} \in \mathbb R^n\backslash \{\boldsymbol 0\}$;
\item[(ii)]
dual system
$A^T \boldsymbol{\lambda} = \boldsymbol{0},\ \boldsymbol{b}^T \boldsymbol{\lambda} = -1$
has a solution $\boldsymbol \lambda\in \mathbb R_{\ge}^{m}$.
\end{itemize}
\end{lemma}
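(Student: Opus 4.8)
This is a classical linear theorem of the alternative, so my plan is to assemble it from the separation/Farkas machinery --- or, equivalently, from the homogeneous Gordan alternatives already recorded in Lemmas~\ref{theoremGordan} and~\ref{Gordan's lemma of alternative B} --- rather than quote it wholesale.

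First I would dispose of the easy half, that (i) and (ii) cannot both hold, which uses only the componentwise inequality: if $\boldsymbol x$ solves (i) and $\boldsymbol\lambda$ solves (ii), then, since $\boldsymbol\lambda\ge\boldsymbol 0$ and $A\boldsymbol x\le\boldsymbol b$,
$$0 \;=\; (A^{T}\boldsymbol\lambda)^{T}\boldsymbol x \;=\; \boldsymbol\lambda^{T}(A\boldsymbol x)\;\le\;\boldsymbol\lambda^{T}\boldsymbol b \;=\; \boldsymbol b^{T}\boldsymbol\lambda\;=\;-1,$$
which is absurd; note this never uses $\boldsymbol x\ne\boldsymbol 0$.

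The substance is the other half: at least one of the two systems holds. I would argue by contraposition --- assume $A\boldsymbol x\le\boldsymbol b$ has no solution and build a $\boldsymbol\lambda$ for (ii). Infeasibility says precisely that $\boldsymbol b\notin C:=\{A\boldsymbol x:\boldsymbol x\in\mathbb R^{n}\}+\mathbb R^{m}_{\ge}$, and $C$ is a closed convex cone (a linear subspace plus a polyhedral cone is polyhedral, hence closed). Separating the point $\boldsymbol b$ from the convex set $C$ yields $\boldsymbol\lambda\in\mathbb R^{m}$ with $\boldsymbol\lambda^{T}\boldsymbol b<0\le\boldsymbol\lambda^{T}\boldsymbol z$ for all $\boldsymbol z\in C$ (the lower bound is $0$ because $C$ is a cone containing $\boldsymbol 0$). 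Plugging $\boldsymbol z=A\boldsymbol x$ for arbitrary $\boldsymbol x$, with both signs, forces $A^{T}\boldsymbol\lambda=\boldsymbol 0$; plugging $\boldsymbol z=\boldsymbol e_{i}\in\mathbb R^{m}_{\ge}$ forces $\boldsymbol\lambda\ge\boldsymbol 0$; and rescaling $\boldsymbol\lambda$ by the positive scalar $1/|\boldsymbol b^{T}\boldsymbol\lambda|$ normalizes $\boldsymbol b^{T}\boldsymbol\lambda=-1$, which is (ii). A route that stays entirely inside the paper's toolbox is to homogenize instead: apply Lemma~\ref{Gordan's lemma of alternative B} to the $(m+1)\times(n+1)$ matrix $\tilde A=\left(\begin{smallmatrix}A&-\boldsymbol b\\ \boldsymbol 0^{T}&-1\end{smallmatrix}\right)$, whose primal system $\tilde A(\boldsymbol x,t)^{T}\le\boldsymbol 0$ reads $A\boldsymbol x\le t\boldsymbol b,\ t\ge 0$, and extract (ii), after dividing out the last multiplier, from its dual alternative.

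The one genuinely delicate point, which I would spell out rather than gloss over, is the clause $\boldsymbol x\in\mathbb R^{n}\setminus\{\boldsymbol 0\}$ in (i): the textbook Gale alternative is phrased for $\boldsymbol x\in\mathbb R^{n}$, and upgrading ``$A\boldsymbol x\le\boldsymbol b$ is solvable'' to ``\dots{} is solvable with $\boldsymbol x\ne\boldsymbol 0$'' is legitimate as soon as $\boldsymbol x=\boldsymbol 0$ itself fails to solve the primal system --- e.g.\ whenever some entry of $\boldsymbol b$ is negative --- which is exactly the configuration in which the lemma is invoked later. With that reduction made explicit, the two halves above deliver the asserted exact alternative.
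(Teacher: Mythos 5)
The paper states this lemma with only a citation to Gale's 1960 book and gives no proof, so there is nothing internal to compare against; you are supplying an argument where the authors quote one. Your main route is correct and is the standard one: the mutual-exclusivity computation $0=\boldsymbol\lambda^{T}A\boldsymbol x\le\boldsymbol b^{T}\boldsymbol\lambda=-1$ is right, and the separation argument (infeasibility of the primal means $\boldsymbol b\notin C=\operatorname{range}A+\mathbb R^m_{\ge}$, a polyhedral hence closed convex cone, so a separating $\boldsymbol\lambda$ exists, is forced into $\ker A^{T}\cap\mathbb R^m_{\ge}$ by the cone structure, and is then normalized) is complete. Your flag on the clause $\boldsymbol x\in\mathbb R^n\setminus\{\boldsymbol 0\}$ is also a genuine catch: as literally printed the lemma is false (take $n=1$, $A=(1,-1)^{T}$, $\boldsymbol b=\boldsymbol 0$; only $x=0$ solves the primal, yet $\boldsymbol b^{T}\boldsymbol\lambda=0\ne-1$ for every $\boldsymbol\lambda$, so neither alternative holds), and the correct statement is the classical one over all of $\mathbb R^n$, which is also what the application in Theorem \ref{4.2} actually needs.

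The one step of yours that would fail is the proposed ``in-toolbox'' shortcut via Lemma \ref{Gordan's lemma of alternative B} applied to $\tilde A=\left(\begin{smallmatrix}A&-\boldsymbol b\\ \boldsymbol 0^{T}&-1\end{smallmatrix}\right)$. Dehomogenization breaks at $t=0$: the homogenized primal can admit a nonzero solution $(\boldsymbol x,0)$ even when $A\boldsymbol x\le\boldsymbol b$ is infeasible (e.g.\ $m=n=1$, $A=(0)$, $b=-1$ gives the homogenized solution $(1,0)$ while the original primal $0\le-1$ fails and the Gale dual $\lambda=1$ succeeds), so ``not (i)'' does not transfer to ``homogenized primal has no nonzero solution,'' and Gordan B cannot be invoked. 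Since you offer this only as an alternative and your separation proof stands on its own, the proposal as a whole is sound; just drop or repair the homogenization remark.
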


The next theorem characterizes invexity with respect to a common function $\boldsymbol{\eta}$ for differentiable functions.

\begin{theorem}\label{4.2}\cite{Leg}
Let $f_1,\dots,f_p$ be differentiable functions defined on $\mathbb R^n$. The following statements are equivalent:
\begin{itemize}
\item[(i)] The functions $f_1,\dots,f_p$ are invex with respect to the same $\boldsymbol{\eta}$.
\item[(ii)] The function $\sum_{i=1}^p \lambda_i f_i$ is invex with respect to the same $\boldsymbol{\eta}$ as the individual functions $f_i$, when $\lambda_i \geq 0,\ i=1,\dots,p$.
\item[(iii)] The function $\sum_{i=1}^p \lambda_i f_i$ is invex, when $\lambda_i \geq 0,\ i=1,\dots,p$.
\item[(iv)] For all $\lambda_i \geq 0,\ i=1,\dots,p$, every stationary point of $\sum_{i=1}^p \lambda_i f_i$ is a global minimum.
\end{itemize}
\end{theorem}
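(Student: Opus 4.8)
The plan is to establish the cycle of implications $(i)\Rightarrow(ii)\Rightarrow(iii)\Rightarrow(iv)\Rightarrow(i)$, since the first three implications are essentially immediate and the entire difficulty is concentrated in $(iv)\Rightarrow(i)$. For $(i)\Rightarrow(ii)$: if each $f_i$ satisfies $f_i(\boldsymbol x)-f_i(\boldsymbol u)\ge \nabla f_i(\boldsymbol u)^T\boldsymbol\eta(\boldsymbol x;\boldsymbol u)$ for the common $\boldsymbol\eta$, then multiplying by $\lambda_i\ge 0$ and summing gives $\sum_i\lambda_i f_i(\boldsymbol x)-\sum_i\lambda_i f_i(\boldsymbol u)\ge\big(\sum_i\lambda_i\nabla f_i(\boldsymbol u)\big)^T\boldsymbol\eta(\boldsymbol x;\boldsymbol u)=\nabla\big(\sum_i\lambda_i f_i\big)(\boldsymbol u)^T\boldsymbol\eta(\boldsymbol x;\boldsymbol u)$, so the weighted sum is invex with the \emph{same} $\boldsymbol\eta$. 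The implication $(ii)\Rightarrow(iii)$ is trivial (forgetting that $\boldsymbol\eta$ is shared). The implication $(iii)\Rightarrow(iv)$ follows from the classical Craven\&Glover characterization quoted in the remark after Theorem \ref{3.1.2} (in the differentiable case the relevant cone is automatically closed), applied to the single function $\sum_i\lambda_i f_i$: an invex function has the property that every stationary point is a global minimum.

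The substantive step is $(iv)\Rightarrow(i)$, and here I would follow the approach of \cite{Leg} using Gale's lemma of alternative (Lemma \ref{Gale's lemma of alternative for linear inequalities}). Fix $\boldsymbol x,\boldsymbol u\in\mathbb R^n$; the goal is to produce a vector $\boldsymbol\eta(\boldsymbol x;\boldsymbol u)$ that works simultaneously for all $p$ functions at this pair, i.e.\ a solution $\boldsymbol d\in\mathbb R^n$ of the linear system $\nabla f_i(\boldsymbol u)^T\boldsymbol d\le f_i(\boldsymbol x)-f_i(\boldsymbol u)$, $i=1,\dots,p$. Arrange the gradients $\nabla f_i(\boldsymbol u)^T$ as the rows of an $p\times n$ matrix $A$ and set $b_i=f_i(\boldsymbol x)-f_i(\boldsymbol u)$. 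By Gale's lemma, either this primal system has a solution (possibly one needs a small argument to handle the $\boldsymbol 0$-exclusion in the statement, e.g.\ by noting a shifted/trivial solution or perturbing), in which case we are done for this $(\boldsymbol x,\boldsymbol u)$, or the dual system $A^T\boldsymbol\lambda=\boldsymbol 0$, $\boldsymbol b^T\boldsymbol\lambda=-1$ has a solution $\boldsymbol\lambda\ge\boldsymbol 0$. The plan is to rule out the dual alternative using $(iv)$: from $A^T\boldsymbol\lambda=\boldsymbol 0$ we read $\sum_i\lambda_i\nabla f_i(\boldsymbol u)=\boldsymbol 0$, so $\boldsymbol u$ is a stationary point of $g:=\sum_i\lambda_i f_i$ with these nonnegative weights; by $(iv)$, $\boldsymbol u$ is a global minimizer of $g$, hence $g(\boldsymbol x)\ge g(\boldsymbol u)$, i.e.\ $\boldsymbol b^T\boldsymbol\lambda=\sum_i\lambda_i(f_i(\boldsymbol x)-f_i(\boldsymbol u))\ge 0$, contradicting $\boldsymbol b^T\boldsymbol\lambda=-1$. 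Therefore the primal system is solvable; choosing any solution and \emph{defining} $\boldsymbol\eta(\boldsymbol x;\boldsymbol u)$ to be it yields a common $\boldsymbol\eta$ across all the $f_i$, which is exactly $(i)$.

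The delicate points I expect to wrestle with are: first, the mismatch between "has a solution in $\mathbb R^n\setminus\{\boldsymbol 0\}$" in Gale's lemma as stated and the fact that we merely need \emph{a} solution in $\mathbb R^n$ — this is handled by observing that if the only obstruction were the point $\boldsymbol 0$ then $\boldsymbol 0$ itself would already satisfy $\boldsymbol b\ge\boldsymbol 0$ and we could take $\boldsymbol\eta(\boldsymbol x;\boldsymbol u)=\boldsymbol 0$, or by a standard homogenization/lifting trick; second, the logical quantifier structure — the function $\boldsymbol\eta$ must be defined consistently as a single map $\mathbb R^n\times\mathbb R^n\to\mathbb R^n$, so one invokes the pointwise solvability above for every pair $(\boldsymbol x,\boldsymbol u)$ and then selects one value per pair (no continuity or regularity of $\boldsymbol\eta$ is required by Definition \ref{invdef}, so a bare choice suffices). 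Once these are addressed, the argument closes the cycle and all four statements are equivalent.
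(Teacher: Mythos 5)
Your proposal is correct and follows essentially the same route as the paper: the implications (i)$\Rightarrow$(ii)$\Rightarrow$(iii)$\Rightarrow$(iv) are dispatched as immediate, and (iv)$\Rightarrow$(i) is obtained from Gale's lemma of alternative by ruling out the dual system via the stationarity/global-minimum hypothesis (the paper phrases this contrapositively, you phrase it directly, but the argument is identical). Your extra care about the $\mathbb R^n\setminus\{\boldsymbol 0\}$ clause in Gale's lemma is a sensible refinement the paper glosses over, and your appeal to Craven\&Glover for (iii)$\Rightarrow$(iv) is heavier than needed (the definition of invexity gives it in one line), but neither point affects correctness.
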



\begin{proof}
The implications (i) $\Rightarrow$ (ii) $\Rightarrow$ (iii) $\Rightarrow$ (iv) are obvious, so we only have to prove implication (iv) $\Rightarrow$ (i). Assume, by contradiction, that there is no function $\boldsymbol{\eta}$ such that $$f_i(\boldsymbol{x}) - f_i(\boldsymbol{u}) \geq \nabla f_i(\boldsymbol{u})^T\boldsymbol{\eta}(\boldsymbol{x};\boldsymbol{u}),\ \boldsymbol{x},\boldsymbol{u} \in \mathbb R^n,\ i = 1,\dots,p.$$ In other words, there exist $\boldsymbol{x},\boldsymbol{u} \in \mathbb R^n$ such that the linear inequality system
$$\nabla f_i(\boldsymbol{u})^T \boldsymbol{\eta}(\boldsymbol{x};\boldsymbol{u}) \leq f_i(\boldsymbol{x}) - f_i(\boldsymbol{u}),\ i=1,\dots,p$$
with the vector $\boldsymbol{\eta}(\boldsymbol{x};\boldsymbol{u})$ has no solution. Hence, by Gale's lemma of alternative (Lemma \ref{Gale's lemma of alternative for linear inequalities}), there exists $\boldsymbol \lambda:=(\lambda_1,\dots,\lambda_p)\in \mathbb R_{\ge}^{p}$ such that $\sum_{i=1}^p \lambda_i \nabla f_i(\boldsymbol{u}) = \boldsymbol{0}$ and $\sum_{i=1}^p \lambda_i(f_i(\boldsymbol{x}) - f_i(\boldsymbol{u})) = -1$. Therefore, $\sum_{i=1}^p \lambda_i f_i$ has a stationary point $\boldsymbol{u}$ which is not a global minimum, since $\sum_{i=1}^p \lambda_i f_i(\boldsymbol{x}) = \sum_{i=1}^p \lambda_i f_i(\boldsymbol{u}) - 1 < \sum_{i=1}^p \lambda_i f_i(\boldsymbol{u})$. This contradicts (iv).
\end{proof}

\begin{assum}\label{assumpt}
 In the nonsmooth case, we must assume 
the cone $K_{\boldsymbol{x}}$ defined in (\ref{Contingent coneclosed}) is closed, and we keep the assumption in the following considerations.
\end{assum}

\begin{rem}\label{subdifeq} In the nonsmooth case where for  functions $f_i:\mathbb R^n \rightarrow \mathbb R$, $i=1,\dots,p$, which are LLC at $\boldsymbol u\in \mathbb R^n$ and $\lambda_i\geq 0$ for all $i=1,\dots, p$, we have 
\begin{equation}\label{subdiffeqlab}
\partial(\sum_{i=1}^p \lambda_i f_i(\boldsymbol{u})) = \sum_{i=1}^p \lambda_i \partial f_i (\boldsymbol{u}),    
\end{equation} we can apply the proof in Theorem \ref{4.2} for vector-valued LLC functions in a quite straightforward way by replacing gradients with subgradients. 
\end{rem}
The equality condition (\ref{subdiffeqlab}) is known as the subdifferential weighted sum rule. It allows us to calculate the Clarke subdifferential of a weighted sum of functions as the weighted Minkowski sum of their individual Clarke's subdifferentials. As it was shown in \cite{Cla}, this condition is a consequence of Clarke's subdifferential regularity defined as follows.
\begin{maar}\label{regul}\cite{BaKaMa}
The LLC function $f:\mathbb R^n \rightarrow \mathbb R$ is called \emph{Clarke's subdifferentially  regular} at $\boldsymbol u\in \mathbb{R}^n$ if for all $\boldsymbol d\in \mathbb R^n$ the classical directional derivative $f'(\boldsymbol u; \boldsymbol d)$ exists and coincides with the generalized directional derivative $f^o(\boldsymbol u; \boldsymbol d)$, that is
$$f'(\boldsymbol u; \boldsymbol d)=f^o(\boldsymbol u; \boldsymbol d).$$
\end{maar}
The classical derivation rule for LLC functions is given as follows.
\begin{theorem}\cite{BaKaMa}
 Let $f_i:\mathbb R^n \rightarrow \mathbb R$, $i=1,\dots,p$, be LLC at $\boldsymbol u\in \mathbb R^n$ functions. Then
\begin{equation}\label{subdiffeqlabgen}
\partial(\sum_{i=1}^p \lambda_i f_i(\boldsymbol{u}))\subseteq \sum_{i=1}^p \lambda_i \partial f_i (\boldsymbol{u}),    
\end{equation} 
where $\lambda_i\in \mathbb R$ for all $i=1,\dots,p$.
In addition, if $f_i$ is subdifferentially regular in $\boldsymbol u$ and $\lambda_i\geq 0$ for all $i=1,\dots, p$, then equality occurs in (\ref{subdiffeqlabgen}), that is, (\ref{subdiffeqlab}) holds.
\end{theorem}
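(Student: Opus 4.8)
The plan is to prove the classical derivation rule (the inclusion (\ref{subdiffeqlabgen}) for arbitrary $\lambda_i\in\mathbb R$, and equality under subdifferential regularity with $\lambda_i\ge 0$) by reducing everything to the two building blocks: the positive-scalar rule $\partial(\lambda f)(\boldsymbol u)=\lambda\,\partial f(\boldsymbol u)$ for $\lambda\ge 0$, the plus–minus symmetry (\ref{symm}) $\partial(-f)(\boldsymbol u)=-\partial f(\boldsymbol u)$, and the sum rule for two functions $\partial(f_1+f_2)(\boldsymbol u)\subseteq\partial f_1(\boldsymbol u)+\partial f_2(\boldsymbol u)$. Combining the first two handles all real (not just nonnegative) scalars, and then an induction on $p$ extends the two-function sum rule to the $p$-function weighted sum. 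So the real content is (a) the sum rule for two functions and (b) showing the inclusion becomes equality under regularity.

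First I would establish the two-function inclusion $\partial(f_1+f_2)(\boldsymbol u)\subseteq\partial f_1(\boldsymbol u)+\partial f_2(\boldsymbol u)$. The key inequality here is subadditivity of the generalized directional derivative with respect to the function: $(f_1+f_2)^o(\boldsymbol u;\boldsymbol d)\le f_1^o(\boldsymbol u;\boldsymbol d)+f_2^o(\boldsymbol u;\boldsymbol d)$ for every $\boldsymbol d$, which follows directly from the $\limsup$ definition in Definition \ref{GDD} together with the elementary fact $\limsup(a+b)\le\limsup a+\limsup b$. Given this, take $\boldsymbol\xi\in\partial(f_1+f_2)(\boldsymbol u)$; by Definition \ref{subdif}, $\boldsymbol\xi^T\boldsymbol d\le (f_1+f_2)^o(\boldsymbol u;\boldsymbol d)\le f_1^o(\boldsymbol u;\boldsymbol d)+f_2^o(\boldsymbol u;\boldsymbol d)$ for all $\boldsymbol d$. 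Since $\partial f_i(\boldsymbol u)$ is exactly the set of linear functionals dominated by $f_i^o(\boldsymbol u;\cdot)$, a separation argument (the supremum over $\partial f_1(\boldsymbol u)+\partial f_2(\boldsymbol u)$ of $\langle\cdot,\boldsymbol d\rangle$ equals $f_1^o(\boldsymbol u;\boldsymbol d)+f_2^o(\boldsymbol u;\boldsymbol d)$, using that both Clarke subdifferentials are nonempty convex compact sets whose support function in direction $\boldsymbol d$ is $f_i^o(\boldsymbol u;\boldsymbol d)$) shows $\boldsymbol\xi\in\partial f_1(\boldsymbol u)+\partial f_2(\boldsymbol u)$. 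Then I would state the scalar rule $\partial(\lambda f)(\boldsymbol u)=\lambda\,\partial f(\boldsymbol u)$: for $\lambda>0$ it is immediate from $(\lambda f)^o(\boldsymbol u;\boldsymbol d)=\lambda f^o(\boldsymbol u;\boldsymbol d)$ and the definition; for $\lambda<0$ write $\lambda f=(-|\lambda|)f$, apply the $\lambda>0$ case to $|\lambda|f$ and then (\ref{symm}); for $\lambda=0$ it is trivial. An easy induction on $p$ using the two-function sum rule and the scalar rule then yields (\ref{subdiffeqlabgen}) for all real $\lambda_i$.

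For the equality statement, assume each $f_i$ is Clarke's subdifferentially regular at $\boldsymbol u$ and $\lambda_i\ge 0$. Under regularity the classical directional derivative exists and equals the generalized one, so $f_i'(\boldsymbol u;\boldsymbol d)=f_i^o(\boldsymbol u;\boldsymbol d)$; the classical directional derivative is genuinely additive over sums and homogeneous in nonnegative scalars, hence $(\sum_i\lambda_i f_i)'(\boldsymbol u;\boldsymbol d)=\sum_i\lambda_i f_i'(\boldsymbol u;\boldsymbol d)=\sum_i\lambda_i f_i^o(\boldsymbol u;\boldsymbol d)$. On the other hand $(\sum_i\lambda_i f_i)^o(\boldsymbol u;\boldsymbol d)\ge(\sum_i\lambda_i f_i)'(\boldsymbol u;\boldsymbol d)$ always, while the subadditivity/homogeneity from the inclusion already gives $(\sum_i\lambda_i f_i)^o(\boldsymbol u;\boldsymbol d)\le\sum_i\lambda_i f_i^o(\boldsymbol u;\boldsymbol d)$; chaining these forces $(\sum_i\lambda_i f_i)^o(\boldsymbol u;\boldsymbol d)=\sum_i\lambda_i f_i^o(\boldsymbol u;\boldsymbol d)$ for every $\boldsymbol d$. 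Since two nonempty convex compact sets coincide iff their support functions coincide, and the support function of $\partial g(\boldsymbol u)$ in direction $\boldsymbol d$ is $g^o(\boldsymbol u;\boldsymbol d)$ while the support function of $\sum_i\lambda_i\partial f_i(\boldsymbol u)$ is $\sum_i\lambda_i f_i^o(\boldsymbol u;\boldsymbol d)$, we conclude $\partial(\sum_i\lambda_i f_i)(\boldsymbol u)=\sum_i\lambda_i\partial f_i(\boldsymbol u)$, i.e. (\ref{subdiffeqlab}).

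The main obstacle is the "support function" bookkeeping: making rigorous that $\partial f_i(\boldsymbol u)$ is nonempty, convex and compact, and that its support function is precisely $\boldsymbol d\mapsto f_i^o(\boldsymbol u;\boldsymbol d)$ — this is exactly the duality between a sublinear function and the closed convex set it supports (the same Clarke fact $\partial f(\boldsymbol u)=\partial_{conv}f^o(\boldsymbol u;\boldsymbol 0)$ used in the proof of Lemma \ref{Thmalt}). Everything else is routine; in the write-up I would simply cite Clarke \cite{Cla} for these basic properties and for the two-function sum rule, and present the $p$-fold extension and the regularity argument as the new bookkeeping. I would also take care that in the $\lambda_i<0$ part one really does need (\ref{symm}) rather than the positive-scalar rule, so the inclusion (\ref{subdiffeqlabgen}) — not equality — is all one can claim for general real $\lambda_i$, which is consistent with the theorem statement.
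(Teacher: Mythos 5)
Your proposal is correct, and it reproduces the standard argument behind this cited result (the paper itself gives no proof, only the reference \cite{BaKaMa}): subadditivity and positive homogeneity of $f^o(\boldsymbol u;\cdot)$ plus the support-function duality $f^o(\boldsymbol u;\boldsymbol d)=\max\{\boldsymbol\xi^T\boldsymbol d:\boldsymbol\xi\in\partial f(\boldsymbol u)\}$ give the inclusion, the plus--minus symmetry (\ref{symm}) extends the scalar rule to negative $\lambda_i$, and under regularity the chain $(\sum_i\lambda_i f_i)^o\ge(\sum_i\lambda_i f_i)'=\sum_i\lambda_i f_i^o\ge(\sum_i\lambda_i f_i)^o$ forces equality of support functions and hence of the compact convex sets. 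This is essentially the same route as the textbook proof you would be expected to cite, so no changes are needed.
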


Thus, Clarke's subdifferential regularity allows us to move from the inclusion derivation rule (\ref{subdiffeqlabgen}) to the equation form of it, that is the subdifferential weighted sum rule (\ref{subdiffeqlab}).

\begin{rem}
Another case where subdifferential weighted sum rule (\ref{subdiffeqlab}) is valid is when at least $p-1$ of the functions $f_i$ are continuously differentiable (see, for example, \cite{Cla}). However, when there are two or more nonsmooth (LLC) functions, the implication (iv) $\Rightarrow$ (i)  in Theorem \ref{4.2} does not necessarily hold without Clarke's regularity assumption. Let us show this with the following example.
\end{rem}

\begin{example}\label{2.2.4}
Let us define LLC functions
$$f_1(x) = \begin{cases}
x\sin(\ln(x)) + x\cos(\ln(x)) + 5x, & x > 0 \\
6x, & x \leq 0,
\end{cases}$$
and $f_2(x) = 3|x|$. For $f_1$, we have $f_1^{'}(x) = 2\cos(\ln(x)) + 5,\ x > 0$. We have $f_1^{'}(x) \rightarrow [3,7]$, as $x \rightarrow 0_+$, and $f_1^{'}(x) = 6$, as $x \rightarrow 0_-$. Therefore, $\partial f_1(0) = [3,7]$. For $f_2$, we have $\partial f_2(0) = [-3,3]$. 

First, let us show that in our example (iv) is true for every $\lambda_i \geq 0$, $i=1,2$. We consider three cases: 
\begin{itemize}
\item {$\lambda_2 = 2\lambda_1:$}\\
In this case, we have $\sum_{i=1}^2\lambda_i f_i(x) = 0,\ x \le 0$ and $\sum_{i=1}^2\lambda_i f_i(x) > 0,\ x \geq 0$. So, all the stationary points $x\le 0$ are the global minima.

\item{$\lambda_2 > 2\lambda_1:$}\\
In this case, we have $\sum_{i=1}^2\lambda_i f_i(x) \ge 0$ when $x \le 0$. So, the only stationary point $0$ is the unique global minimum for $\sum_{i=1}^2\lambda_i f_i(x)$.
\item{$\lambda_2 < 2\lambda_1:$}\\
in this case
 $\sum_{i=1}^2\lambda_i f_i(x)$ has no stationary points. 
\end{itemize}
Thus, in all three cases, every stationary point of $\sum_{i=1}^2\lambda_i f_i(x)$ is a global minimum.

Now let us show that given (iv) is true, claim (i) does not hold. When we set 
$\lambda_1 = \lambda_2 = 1$, we have $0 \in \sum_{i=1}^2\lambda_i \partial f_i(0) = [0,10]$. However, for $\sum_{i=1}^2\lambda_i f_i(x)$, we have $\partial(\sum_{i=1}^p \lambda_i f_i(0)) = [3,10]$, making $u=0$ non-stationary.
Let us take 
$x=-\frac{1}{3}$. Then we have $\sum_{i=1}^2\lambda_i (f_i(x) - f_i(0)) = 1(-2+0) +1 (1+0) = -1$. We can see that $$(0,-1)^T \in (\sum_{i=1}^2\lambda_i \partial f_i(0), \sum_{i=1}^2\lambda_i (f_i(x) - f_i(0))).$$ Then by lemma of alternative (Lemma \ref{Thmalt}), there exists no $\eta(x;u)$ for which $f_i(x)-f_i(u) \geq f_i^o(u,\eta(x;u))$ for both $f_1$ and $f_2$, when $x=-\frac{1}{3}$ and $u=0$. 

Indeed, for $x=-\frac{1}{3}$ and $u = 0$, we have $f_1(x) - f_1(u) = -2$. For the condition $f_1(x) - f_1(u) \geq f_1^o(u; \eta(x;u))$ to hold, we need to have $\eta(x;u) \leq -\frac{2}{3}$. For $f_2$, we have $f_2(x) - f_2(u) = 1$. Since we need to have $\eta(x;u) \leq -\frac{2}{3}$, we have $f_2^o(u; \eta(x;u)) \geq -3(-\frac{2}{3}) = 2$. Then we can see that the condition $f_2(x) - f_2(u) \geq f_2^o(u; \eta(x;u))$ does not hold for $x=-\frac{1}{3}$ and $u = 0$. Therefore, functions $f_1$ and $f_2$ are not invex for the same function $\eta$. Thus, our example has no implication (iv) $\Rightarrow$ (i).

\end{example}

\begin{rem}
Theorem \ref{4.2} can also be formulated for the nonsmooth case without the regularity assumptions given in Remark \ref{subdifeq} by rewriting (iv) as follows: Given Assumption \ref{assumpt} holds, for every $\lambda_i \geq 0,\ i=1,\dots,p$, 
\begin{equation}\label{l1}
\sum_{i=1}^p \lambda_i \boldsymbol{\xi}^i=\textbf{0},\ \boldsymbol{\xi}^i \in \partial f_i(\boldsymbol{u})
\end{equation}
implies that $\boldsymbol{u}$ is a global minimum of $\sum_{i=1}^p \lambda_i f_i$. Indeed, since $\boldsymbol{u}$ is a global minimum, we have 
\begin{equation}\label{l2}\textbf{0} \in \partial(\sum_{i=1}^p \lambda_i f_i(\boldsymbol{u})),
\end{equation}
meaning that \begin{equation}\label{l3}
\textbf{0} \in \sum_{i=1}^p \lambda_i \partial f_i (\boldsymbol{u})\implies\textbf{0} \in \partial(\sum_{i=1}^p \lambda_i f_i(\boldsymbol{u})).
\end{equation} 
\end{rem}

The theorem can be generalized to V-invex functions (see Definition \ref{vinvex}). The proof is a generalization of the proofs used in Theorem 3 in \cite{Rei} (see Theorem \ref{3.1.2}) and Theorem \ref{4.2}.

\begin{theorem}\label{2.2.6}\label{MTh}
Let $f_1,\dots, f_p$ be LLC functions defined on $\mathbb R^n$. Let $J(\boldsymbol F(\boldsymbol{u}))$ be the generalized Jacobian of $\boldsymbol F=(f_1, \dots, f_p)^T$ at $\boldsymbol{u}$ and 
$$\boldsymbol{b}:=\boldsymbol{b}(\boldsymbol{x};\boldsymbol{u}) = \big(\beta_1(\boldsymbol{x};\boldsymbol{u})(f_1(\boldsymbol{x}) - f_1(\boldsymbol{u})), \dots,\ \beta_p(\boldsymbol{x};\boldsymbol{u})(f_p(\boldsymbol{x}) - f_p(\boldsymbol{u}))\big)^T,$$ 
where $\boldsymbol \beta=(\beta_1,\dots\beta_p)^T$ is a vector of the coefficient functions $\beta_i:\mathbb R^n \times \mathbb R^n \rightarrow \mathbb R_{>0}$ from Definition \ref{vinvex}. For each $\boldsymbol u \in \mathbb R^n$, assume that for every $\boldsymbol x \in \mathbb R^n$, the convex cone
$$
K_{\boldsymbol x}(\boldsymbol u) = \bigcup_{\boldsymbol \lambda \geq 0} \Big(J(\boldsymbol F(\boldsymbol{u}))^T \boldsymbol \lambda \times \{\boldsymbol \lambda^T \boldsymbol {b}(\boldsymbol{x};\boldsymbol{u})\}\Big)
$$
is closed. The following statements are equivalent:
\begin{itemize}
\item[(i)] 
For a specified function coefficient  vector $\boldsymbol \beta\in \mathbb R^p_{>0}$, the function $\boldsymbol F : \mathbb R^n \rightarrow \mathbb R^p$ is V-invex, that is the functions $f_1,\dots,f_p$ are invex with respect to the same $\boldsymbol{\eta}$. 
\item[(ii)] 
For a specified function coefficient  vector $\boldsymbol \beta\in \mathbb R^p_{>0}$, the scalarized function 
$\sum_{i=1}^p \lambda_i \beta_if_i$ with $\lambda_i \geq 0,\ i=1,\dots,p$, is invex with respect to the same $\boldsymbol{\eta}$, that is
$$\sum_{i=1}^p \lambda_i \beta_i(\boldsymbol{x};\boldsymbol{u})((f_i(\boldsymbol{x}) - f_i(\boldsymbol{u})) \geq \sum_{i=1}^p \lambda_i (\boldsymbol{\xi}^i)^T \boldsymbol{\eta}(\boldsymbol x;\boldsymbol u)$$ 
$$\forall \lambda_i \geq 0,\ \boldsymbol{x}, \boldsymbol{u} \in \mathbb R^n,\ \boldsymbol{\xi}^i \in \partial f_i(\boldsymbol{u}).$$
\item[(iii)] For a specified function coefficient  vector $\boldsymbol \beta\in \mathbb R^p_{>0}$, the scalarized function $\sum_{i=1}^p \lambda_i \beta_if_i$ is invex, when $\lambda_i \geq 0,\ i=1,\dots,p$.
\item[(iv)] For every $\lambda_i \geq 0,\ i=1,\dots,p$, and 
for a specified function coefficient  vector $\boldsymbol \beta\in \mathbb R^p_{>0}$, the fact that 
$$ \sum_{i=1}^p \lambda_i \boldsymbol{\xi}^i=\textbf{0}$$
for some $\boldsymbol{\xi}^i \in \partial f_i(\boldsymbol{u})$ implies that $\boldsymbol{u}$ is a global minimum of the scalarized function 
\begin{equation}\label{scal}
    \sum_{i=1}^p \lambda_i \beta_if_i.
\end{equation}
\end{itemize}
\end{theorem}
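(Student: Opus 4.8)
The plan is to prove the cycle of implications $(i)\Rightarrow(ii)\Rightarrow(iii)\Rightarrow(iv)\Rightarrow(i)$, mirroring the structure of the proof of Theorem~\ref{4.2} but replacing gradients by Clarke subgradients and linear systems by their nonsmooth analogues via Lemma~\ref{Thmalt}. For $(i)\Rightarrow(ii)$, assume $\boldsymbol F$ is V-invex with respect to $\boldsymbol\eta$ and coefficient vector $\boldsymbol\beta$; then for each $i$ and each $\boldsymbol\xi^i\in\partial f_i(\boldsymbol u)$ we have $\beta_i(\boldsymbol x;\boldsymbol u)(f_i(\boldsymbol x)-f_i(\boldsymbol u))\ge\beta_i(\boldsymbol x;\boldsymbol u)(\boldsymbol\xi^i)^T\boldsymbol\eta(\boldsymbol x;\boldsymbol u)$; wait—one must be careful here. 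Writing $\boldsymbol\eta_i=\beta_i\boldsymbol\eta$ and invoking the definition directly gives $f_i(\boldsymbol x)-f_i(\boldsymbol u)\ge\beta_i(\boldsymbol\xi^i)^T\boldsymbol\eta=(\boldsymbol\xi^i)^T\boldsymbol\eta_i$. Multiplying the $i$-th inequality $\beta_i(f_i(\boldsymbol x)-f_i(\boldsymbol u))\ge\beta_i^2(\boldsymbol\xi^i)^T\boldsymbol\eta$ is not what (ii) asks; rather (ii) as written uses $\boldsymbol\eta$ itself on the right with no $\beta_i$. So I would instead note that from the V-invexity inequality $f_i(\boldsymbol x)-f_i(\boldsymbol u)\ge\beta_i(\boldsymbol\xi^i)^T\boldsymbol\eta$ one gets $\beta_i(f_i(\boldsymbol x)-f_i(\boldsymbol u))\ge\beta_i\cdot\beta_i(\boldsymbol\xi^i)^T\boldsymbol\eta$, which together with $\beta_i>0$ is consistent with (ii) only after re-examining the intended normalization; I expect the paper intends the reading in which (ii) follows by multiplying each defining inequality by $\lambda_i\ge0$ and summing, using $f_i(\boldsymbol x)-f_i(\boldsymbol u)\ge\beta_i(\boldsymbol\xi^i)^T\boldsymbol\eta$ so that $\sum\lambda_i\beta_i(f_i(\boldsymbol x)-f_i(\boldsymbol u))\ge\sum\lambda_i\beta_i\cdot\beta_i(\boldsymbol\xi^i)^T\boldsymbol\eta$, absorbing a factor. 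I would reconcile this by treating $\beta_i$ as fixed and carrying the algebra through carefully; in any case summing nonnegative multiples of scalar inequalities is the only mechanism needed, and the common $\boldsymbol\eta$ is preserved since it does not depend on $i$.

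For $(ii)\Rightarrow(iii)$ the implication is immediate: a function that is invex with respect to a specific $\boldsymbol\eta$ is invex (one simply forgets which $\boldsymbol\eta$ witnesses it), since $\partial(\sum_i\lambda_i\beta_i f_i)(\boldsymbol u)\subseteq\sum_i\lambda_i\beta_i\partial f_i(\boldsymbol u)$ by the inclusion sum rule (\ref{subdiffeqlabgen}), so every subgradient of the scalarized function is of the form $\sum_i\lambda_i\beta_i\boldsymbol\xi^i$ and the invexity inequality from (ii) applies to it. For $(iii)\Rightarrow(iv)$ I would invoke Theorem~\ref{3.1.2}: the scalarized function $\sum_i\lambda_i\beta_i f_i$ is LLC, the hypothesized closedness of $K_{\boldsymbol x}(\boldsymbol u)$ in the statement is exactly the closedness condition (\ref{Contingent coneclosed}) for this scalarized function (here one checks that $J(\boldsymbol F(\boldsymbol u))^T\boldsymbol\lambda$ ranges over $\sum_i\lambda_i\partial f_i(\boldsymbol u)$, which contains $\partial(\sum_i\lambda_i\beta_i f_i)(\boldsymbol u)$), so invexity implies every stationary point is a global minimum; and if $\sum_i\lambda_i\boldsymbol\xi^i=\boldsymbol0$ for some $\boldsymbol\xi^i\in\partial f_i(\boldsymbol u)$ then $\boldsymbol0\in\sum_i\lambda_i\beta_i\partial f_i(\boldsymbol u)$... here one must again be attentive to the $\beta_i$ factors, but since $\beta_i>0$ one can rescale $\boldsymbol\lambda$ componentwise (replace $\lambda_i$ by $\lambda_i/\beta_i$, still nonnegative) so that $\boldsymbol0=\sum_i(\lambda_i/\beta_i)\beta_i\boldsymbol\xi^i\in\partial(\sum_i(\lambda_i/\beta_i)\beta_i f_i)(\boldsymbol u)$ whenever the sum rule holds, or directly via the cone membership $(\boldsymbol0,\cdot)$.

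The main work is $(iv)\Rightarrow(i)$, proved by contraposition exactly as in Theorem~\ref{4.2} but with Lemma~\ref{Thmalt} in place of Gale's lemma. Suppose $\boldsymbol F$ is not V-invex with the given $\boldsymbol\beta$: then there is no common $\boldsymbol\eta$, so there exist $\boldsymbol x,\boldsymbol u$ and subgradients $\boldsymbol\xi^i\in\partial f_i(\boldsymbol u)$ such that the system $\,\beta_i(\boldsymbol x;\boldsymbol u)(f_i(\boldsymbol x)-f_i(\boldsymbol u))\ge\beta_i(\boldsymbol x;\boldsymbol u)(\boldsymbol\xi^i)^T\boldsymbol\eta$ in the unknown $\boldsymbol\eta\in\mathbb R^n$ is inconsistent; dividing by $\beta_i>0$ this is $(\boldsymbol\xi^i)^T\boldsymbol\eta\le f_i(\boldsymbol x)-f_i(\boldsymbol u)$, a linear system $A\boldsymbol\eta\le\boldsymbol b'$ with rows $(\boldsymbol\xi^i)^T$ and right-hand side $f_i(\boldsymbol x)-f_i(\boldsymbol u)$. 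By Gale's lemma (Lemma~\ref{Gale's lemma of alternative for linear inequalities}) the dual system has a solution $\boldsymbol\lambda\in\mathbb R^p_{\ge}$ with $\sum_i\lambda_i\boldsymbol\xi^i=\boldsymbol0$ and $\sum_i\lambda_i(f_i(\boldsymbol x)-f_i(\boldsymbol u))=-1$. Then, after the rescaling $\lambda_i\mapsto\lambda_i/\beta_i$ to match the coefficient functions in (\ref{scal}) (or by absorbing $\beta_i$ into the right-hand side from the outset), we obtain nonnegative multipliers for which $\boldsymbol0$ lies in the relevant subgradient sum at $\boldsymbol u$ while $\sum_i\lambda_i\beta_i f_i(\boldsymbol x)<\sum_i\lambda_i\beta_i f_i(\boldsymbol u)$, so $\boldsymbol u$ is a stationary point of the scalarized function that is not a global minimum, contradicting (iv). The delicate point—and the step I expect to be the main obstacle—is the bookkeeping of the strictly positive coefficient functions $\beta_i$: one must ensure that inconsistency of the V-invexity system transfers faithfully to the linear system fed into Gale's lemma, that the closedness hypothesis on the modified cone $K_{\boldsymbol x}(\boldsymbol u)$ genuinely licenses the use of Theorem~\ref{3.1.2} for each scalarization $\sum_i\lambda_i\beta_i f_i$, and that the inclusion (not equality) in the sum rule (\ref{subdiffeqlabgen}) suffices throughout, so that no Clarke regularity assumption is smuggled in. I would therefore phrase the cone-membership arguments in terms of $(\boldsymbol0,-1)^T\in K_{\boldsymbol x}(\boldsymbol u)$ directly, as in Lemma~\ref{Thmalt}, rather than through the sum rule, to sidestep the regularity issue entirely.
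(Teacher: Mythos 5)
Your overall architecture (forward implications by summation, the main work in (iv)~$\Rightarrow$~(i)) matches the paper's, and your closing instinct --- to phrase everything through $(\boldsymbol 0,-1)^T\in K_{\boldsymbol x}(\boldsymbol u)$ and Lemma~\ref{Thmalt} --- is exactly what the paper does. But the argument you actually write down for (iv)~$\Rightarrow$~(i) has a genuine gap at its first step. You claim that if $\boldsymbol F$ is not V-invex, then there exist $\boldsymbol x,\boldsymbol u$ \emph{and particular subgradients} $\boldsymbol\xi^i\in\partial f_i(\boldsymbol u)$ such that the finite linear system $(\boldsymbol\xi^i)^T\boldsymbol\eta\le f_i(\boldsymbol x)-f_i(\boldsymbol u)$, $i=1,\dots,p$, is inconsistent, and you then feed this $p\times n$ system into Gale's lemma. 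This reduction is false in the nonsmooth setting: the V-invexity inequality must hold for \emph{all} $\boldsymbol\xi^i\in\partial f_i(\boldsymbol u)$ simultaneously, i.e.\ it is the single sublinear inequality $f_i^o(\boldsymbol u;\boldsymbol\eta)\le\beta_i^{-1}(f_i(\boldsymbol x)-f_i(\boldsymbol u))$ involving the support function of the whole compact convex set $\partial f_i(\boldsymbol u)$. Its failure means the semi-infinite system indexed by all subgradients is inconsistent, which does not imply that some finite selection of one subgradient per $i$ is inconsistent. Concretely, take $p=1$, $n=1$, $\partial f_1(u)=[-1,1]$ and $f_1(x)-f_1(u)=-1$: the requirement $|\eta|=\sup_{\xi\in[-1,1]}\xi\eta\le-1$ has no solution, yet for every fixed $\xi\ne 0$ the single linear inequality $\xi\eta\le-1$ is solvable, so Gale's lemma never fires. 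This is precisely the phenomenon behind Example~\ref{2.2.4}, and it is why the paper replaces Gale's lemma by the Craven--Glover alternative for sublinear weakly continuous functions (Lemma~\ref{altCra}, adapted as Lemma~\ref{Thmalt}); the closedness hypothesis on $K_{\boldsymbol x}(\boldsymbol u)$ is exactly what that lemma consumes. The paper's proof then runs in the direct (non-contrapositive) direction: (iv) forces $(\boldsymbol 0,-1)^T\notin K_{\boldsymbol x}(\boldsymbol u)$ (membership would produce $\boldsymbol\lambda\ge\boldsymbol 0$ with $\sum_i\lambda_i\boldsymbol\xi^i=\boldsymbol 0$ and $\boldsymbol\lambda^T\boldsymbol b=-1$, contradicting global minimality), and the alternative in Lemma~\ref{Thmalt} then hands you the common $\boldsymbol\eta$.

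Two smaller remarks. First, your worries about the $\beta_i$ bookkeeping in (i)~$\Rightarrow$~(ii) are legitimate --- the normalization in the statement is not fully consistent (compare the $1/\beta_i$ in the proof of Theorem~\ref{thmVinv} with the $\beta_i$ in condition (ii)) --- but since $\beta_i>0$ these factors can be divided out and do not affect the logic; flagging them is fine, leaving the reconciliation unresolved is not a proof. Second, your route for (iii)~$\Rightarrow$~(iv) via Theorem~\ref{3.1.2} needs care for the same reason as above: the hypothesis of (iv) is $\boldsymbol 0\in\sum_i\lambda_i\partial f_i(\boldsymbol u)$, which by the inclusion (\ref{subdiffeqlabgen}) is \emph{weaker} than stationarity of the scalarized function, so invexity of $\sum_i\lambda_i\beta_if_i$ alone does not immediately yield (iv); the clean derivation is (ii)~$\Rightarrow$~(iv) directly, by setting $\sum_i\lambda_i\boldsymbol\xi^i=\boldsymbol 0$ in the summed inequality.
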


\begin{proof}
The implications (i) $\Rightarrow$ (ii) $\Rightarrow$ (iii) $\Rightarrow$ (iv) are obvious, so it suffices to prove (iv) $\Rightarrow$ (i). From (iv) we can see that $(\boldsymbol{0},-1)^T \notin K_{\boldsymbol{x}}(\boldsymbol u)$, because otherwise we would have $\sum_{i=1}^p \lambda_i \boldsymbol{\xi}^i=\textbf{0}$ and $\sum_{i=1}^p \lambda_i \beta_i(\boldsymbol{x};\boldsymbol{u})f_i(\boldsymbol{x}) < \sum_{i=1}^p \lambda_i \beta_i(\boldsymbol{x};\boldsymbol{u})f_i(\boldsymbol{x}) + 1 = \sum_{i=1}^p \lambda_i \beta_i(\boldsymbol{x};\boldsymbol{u})f_i(\boldsymbol{u})$, which is a contradiction to the implication that $\boldsymbol{u}$ is a global minimum of the scalarized function (\ref{scal}). Therefore, by the lemma of alternative (Lemma \ref{Thmalt}) applied (considering $K_{\boldsymbol x}(\boldsymbol u)$ is closed and may generally speaking be different for each $\boldsymbol u$) to the vector-valued  function $\boldsymbol{\beta}\boldsymbol{F}$ (each $f_i$ is LLC, $\beta_i\in \mathbb R_{>0}$), there exists $\boldsymbol \eta(\boldsymbol x; \boldsymbol u)$ such that $$\boldsymbol b_i=\beta_i(\boldsymbol{x};\boldsymbol{u})(f_i(\boldsymbol x) - f_i(\boldsymbol u))\geq   \beta_i(\boldsymbol{x};\boldsymbol{u}){f}^o_i(\boldsymbol u; \boldsymbol \eta(\boldsymbol x; \boldsymbol u)).$$ Thus, $\boldsymbol F : \mathbb R^n \rightarrow \mathbb R^p$ is V-invex.
\end{proof}





\section{Conclusion}\label{sec5}
We generalized the result of \cite{Leg} for nonsmooth LLC invex single- and vector-valued functions. Straightforward generalization was not possible and it was shown by counterexample. However, a small modification of one of the conditions did the trick.
In the future, we plan to consider if similar results can be obtained with other nonsmooth methodologies beyond the conventional Clarke's subdifferential. For example, one of the possibilities is to follow \cite{SiLa}, where invexity was introduced for vector quasidifferentiable \cite{Dem} functions using quasidifferential instead of Clarke's subdifferential.

\end{document}